\newcommand{\Ci}{\mathscr{C}}
\newcommand{\Sone}{\mathbb{S}^1}
\newcommand{\R}{\mathbb{R}}
\newcommand{\F}{\mathcal{F}}
\newcommand{\vertiii}[1]{{\left\vert\kern-0.25ex\left\vert\kern-0.25ex\left\vert #1 
    \right\vert\kern-0.25ex\right\vert\kern-0.25ex\right\vert}}
\newcounter{tmp}
\theoremstyle{plain}
\newtheorem{thm}{Theorem}%[section]
\newtheorem{cor}[thm]{Corollary}
\newtheorem{lem}[thm]{Lemma}
\newtheorem{prop}[thm]{Proposition}
\newtheorem{problem}[thm]{Problem}
\theoremstyle{definition}
\newtheorem{dfn}[thm]{Definition}
\newtheorem{example}[thm]{Example}
\theoremstyle{remark}
\newtheorem{remark}[thm]{Remark}
   \def\MR#1{}
\numberwithin{equation}{section}
\begin{document}

\title{Existence and non-existence of length averages for foliations}
%\date{}
%\author{}

\author{Yushi Nakano}
%\givenname{Yushi}
%\surname{Nakano}
\address{Department of Mathematics, Tokai University,  4-1-1 Kitakaname, Hiratuka,
Kanagawa, 259-1292, JAPAN}
\email{yushi.nakano@tsc.u-tokai.ac.jp}

\author{Tomoo Yokoyama}
%\givenname{Tomoo}
%\surname{Yokoyama}
\address{Department of Mathematics, Kyoto University of Education/JST Presto, 
1 Fujinomori, Fukakusa, Fushimi-ku Kyoto, 612-8522, Japan}
\email{tomoo@kyokyo-u.ac.jp}

%\dedicatory{Dedicated to Atsuro Sannami for his $60^{th}$ birthday}
\thanks{The first author is partially supported by JSPS KAKENHI Grant Numbers  17K05283.
The second author is partially supported by JST PRESTO Grant Number JPMJPR16ED at Department of Mathematics, Kyoto University of Education.}

\begin{abstract}
Since the pioneering work of Ghys, Langevin and Walczak among others, it has been known that several methods of dynamical systems theory can be adopted to study of foliations. Our aim in this paper is to investigate complexity of foliations, by generalising existence problem of  time averages in dynamical systems theory to foliations: It has recently been realised that a positive Lebesgue measure set of points without time averages only appears for complicated dynamical systems, such as dynamical systems with heteroclinic connections or homoclinic tangencies. In this paper, we introduce the concept of length averages to singular foliations, and attempt to collect interesting examples with/without length averages. In particular, we demonstrate that  length averages exist everywhere for any codimension one $\Ci^1 $ orientable singular foliation  without degenerate  singularities on a  compact surface under a mild  condition on quasi-minimal sets of the foliation, which is in strong contrast to time averages of surface flows. 
\end{abstract}

\maketitle

\section{Introduction}\label{section:introduction}
There are plenty of papers in which several methods of dynamical systems theory were successfully adopted to study of foliations. One of the pioneering work was done by Ghys, Langevin and Walczak \cite{Ghys1988}, in which entropies of foliations were first introduced (a close link between the entropy of foliations and the Godbillon-Vey class of foliations can be found in unpublished earlier works by
%topology and dynamics of foliations was found  in the earlier works by
 Duminy, see a paper \cite{CC2002} and the preface of \cite{Walczak2012}). A standard reference for connection between the topology and  dynamics of foliations is a monograph of Walczak \cite{Walczak2012}.

Our aim in this paper is to investigate ``complexity'' of foliations, by generalising existence problem 
%and non-existence
 of time averages in dynamical systems theory to foliations. 
We first recall time averages for flows. 
A $\Ci ^r$ \emph{flow $F$}  on  a smooth manifold  $M$ with  $r\geq 1$  is given as  a $\Ci ^r$ mapping from $\R \times M$ to $M$ such that $f^t\equiv F(t,\cdot)$ is a $\Ci ^r$ diffeomorphism  for each $t\in \R$, and that  $f^0=\mathrm{id} _M$ and $f^{s+t}=f^s\circ f^t$ for each $s,t \in \R$. 
For each point $x\in M$ and continuous function $\varphi : M \to \mathbb R$, we call
% (called an observable), we call 
%We say that  time averages  exist  at a point $x\in M$ when for any continuous function $\varphi$ on $M$ (called an observable), the time average of $\varphi$ along the orbit of $x$, given by
\begin{equation}\label{def:hb}
\lim _{T\to \infty}\frac{1}{T} \int ^T_0 \varphi (f^t(x)) dt
\end{equation}
the \emph{time average} of $\varphi$ at $x$. 
%if the limit exists. 
%(The point $x$ is called a \emph{point with time averages} for $F$, for short.) 
%This terminology was first introduced by Ruelle in \cite{Ruelle2001}.
%If the limit \eqref{def:hb} does not exist, $\frac{1}{T}\int _{0}^T\varphi (f^t(x))dt$ keeps changing considerably so that its value gives information about the epoch to which $T$ belongs: they have a ``history''.
By Birkhoff's ergodic theorem (cf.~\cite{KH95}), if $\mu$ is an invariant measure of $F$,  then 
%for $\mu$-almost every point,
 the time average of  any continuous function exists $\mu$-almost everywhere.
%\marginpar{observableの言葉は消した方が無難か}  
%Therefore, a point $x$ at which the time average of some continuous function does not exist is called a \emph{regular } 
%However, it is another problem whether time averages exist for Lebesgue-almost every point, which was asked in the Palis Conjecture \cite{palis2000global} for ``prevalent''  dynamical systems. 
Hence, when the time average of some continuous function does not exist  at a point,  the point is called a \emph{non-typical point} or an  \emph{irregular point} 
%among some dynamicists
 (see e.g.~\cites{BS2000, Thompson2010}; 
such a point is also called a point with \emph{historic behaviour} in another context \cites{Ruelle2001, Takens2008, KS2017, LR2017}).
%\marginpar{Hence以下の文を消すか否か：実際にはこれらの単語は一度も使わないのでなくてもいいが、hbの業界に顔を立てる意味ではあった方がいい；また前のversionとの関係も明確になり、1st refereeの顔も立てられる？しかしおそらく削除}

There are a wide variety of examples $F$ for which time averages exist for any continuous function and Lebesgue almost every point. 
(We notice that it is rather special  that time averages exist at \emph{every} point; see e.g.~
%: 
%for subshifts of finite type with specification property,  the  time average of some continuous function  does not exist on a residual subset of the state space 
\cites{BS2000, Thompson2010}.) 
%\marginpar{もう少し適切なreferenceはないものか？}
% is  in the sense that the set of points with historic behaviour is a zero Lebesgue measure set; 
%see examples in \cite{Ruelle2001}.
%From measure theoretical point of view, one of most important examples are conservative (i.e. preserving a Lebesgue measure) dynamical system, 
%On the other hand, from smooth dynamical system poin
The simplest example is a conservative dynamical system (i.e.~a dynamical system for which a Lebesgue measure is invariant), due to Birkhoff's ergodic theorem. 
%, by virtue of   Birkhoff's ergodic theorem. 
Another famous example may be a $\Ci ^r$ Axiom A flow on a compact smooth Riemannian manifold with no cycles for $r> 1$
(\cites{BR1975, Ruelle1986}). 
%;  for a $\Ci ^r$ diffeomorphism $P$, the time average of a continuous  function $\varphi$ at a point $x$ is given by $\lim _{n\to \infty} 1/n \sum _{j=0}^{n-1} \varphi (P^j(x))$). 
It is also known that time averages  exist Lebesgue almost everywhere  for large classes of non-uniformly hyperbolic dynamical systems \cites{Alves2000, ABV2000}. 
Moreover, from classical theorems by Denjoy and Siegel (\cite{Tam}*{\S 1}; see also \cite{KH95}*{\S 11} and Subsection \ref{subsection:sf}), we can immediately conclude  that  if $F$ is a  $\Ci ^r$ flow on a torus 
%an orientable compact surface
 with no singular points for $r\geq1$, then time averages exist at 
(not only Lebesgue almost every point but also)
 every point in the torus.

On the other hand, it also has been known  that for several (non-hyperbolic) dynamical systems, the time average of some continuous function does not exist on a Lebesgue positive measure set.  
%there are several (non-hyperbolic) dynamical systems having a Lebesgue-positive measure on which some time average does not exist.  
%\marginpar{Palis conjectureについてもTakens problemについても、その問題が力学系理論を前進させてきた、という記述がないなら削除したほうがいい；ここでは、「逆にnon-existenceも少なからずある」ぐらいでいいか：ここでexistence側をあげる意図はsurfaceの場合の話に限って、なので、意味のない引用（Axiom A？）は省くか？また、ここでexistenceを取り上げるなら、近年のnonhyperbolicのSRB測度の存在に関する動向にも触れておくべきか→話がややこしくなるし、refereeからも求められていないので、思い切ってAixom Aは省いて、}
%一方、non-existenceの例も（nonhyperbolic systemの中に）少なからず昔から知られている.. Bowen..（せっかくなので、expanding mapやKellerの話も出して、every pointでなければhyperbolicでもむしろ普通にあることを強調すべきか；文調が崩れるので、「every pointでないことを強調する」ぐらいをサラッと言うぐらいでいいか） さらに、persistentなものがあるかはTakens問題というが、これも最近解決された。
%Another light was shaded in Takens' Last Problem \cite{Takens2008}, which asks whether time averages ``persistently'' do not exist for each point in a Lebesgue-positive measure set 
% (in other words, the set of points with historic behaviour is ``persistently'' of positive Lebesgue measure, where a point is said to have \emph{historic behaviour} if there is a continuous function $\varphi$ such that the time average of $\varphi$ along the orbit of $x$ does not exists). 
%{\color{blue}要推敲}
% On the other hand, ``persistent'' examples with non-negligible historic behaviour had not been known for long time:
One of the most famous example is Bowen's folklore example, which is a smooth surface flow 
%on an orientable compact surface
 with two heteroclinically connected singular points 
%(or a $\Ci ^{\infty}$ 3-D flow with no singular points) 
%for which the set of points without time averages includes a positive Lebesgue measure set 
(refer to \cite{Takens1994}; see also Remark \ref{rmk:Bowen} and Figure \ref{fig0}). 
One can also find another interesting example without time averages 
 %positive Lebesgue measure set consisting of points without time averages 
constructed by Hofbauer and Keller \cite{HK1990} for some quadratic maps.
Furthermore,  recently Kiriki and Soma \cite{KS2017} showed that there is a locally dense class of $\Ci ^r$  surface diffeomorphisms for which some time average does not exist on  a positive Lebesgue measure  set, by employing dynamical systems with homoclinic tangencies 
% (which is known to exhibit quite complicated behaviours, see \cite{PT1995}). 
%
(for a $\Ci ^r$ diffeomorphism $P$, the time average of a continuous  function $\varphi$ at a point $x$ is given by $\lim _{n\to \infty} 1/n \sum _{j=0}^{n-1} \varphi (P^j(x))$; their result was further extended to  three-dimensional flows in \cite{LR2017}). 
%by Labouriau-Rodrigues 
%We also refer to \cites{Ruelle2001, Takens2008, HK1990, LR2017} for  dynamical systems 
%having a positive Lebesgue measure set consisting of points 
%without time averages on a positive Lebesgue measure set.
%However, his example was not stable under small perturbations.
%Hence, Takens asked in \cite{Takens2008} whether there is a ``persistent'' class of $\Ci ^r$ diffeomorphisms or flows on  a compact smooth Riemannian manifold for which there is a positive Lebesgue measure  set consisting of points with historic behaviour (for a $\Ci ^r$ diffeomorphism $P$, historic behaviour is defined as the non-existence of a time average given  by $\lim _{n\to \infty} 1/n \sum _{j=0}^{n-1} \varphi (P^j(x))) $. Very recently, this problem was  solved by Kiriki-Soma \cite{KS2017} for diffeomorphisms and Labouriau-Rodrigues \cite{LR2017} for flows, by employing dynamical systems with homoclinic tangencies (which is known to exhibit quite complicated behaviours, see \cite{PT1995}). 
%\marginpar{PT1995を削除してirregular setがresidualな例が普通であることをここで言うか？やはりもう少し下の方で言った方がいいか？}

From that background we here consider length averages for $\Ci ^r$ singular foliations with $r\geq 1$.
A standard reference of $\Ci ^r$ (regular) foliations is \cites{HH1986A,HH1987B,CC2003I,CC2003II}. 
We refer to Stefan \cite{Stefan1974} and Sussmann \cite{Sussmann1973} for the definition of singular foliations on  a smooth manifold $M$ 
without boundary. 
We here  recall that it is shown in \cite{Kubarski1990} that when $r\geq 1$, a $\Ci ^r$ singular foliation $\F$ on a smooth manifold $M$ 
without boundary
 is equivalent to a partition into immersed connected submanifolds  such that for any point $x \in M$, there is a fibred chart at $x$ with respect to $\mathcal F$ (see \cite{Kubarski1990} for the definition of fibred charts; note that fibred charts are not well defined if $M$ has boundary), so that each element $L$ of $\mathcal{F}$  (called a \emph{leaf}) admits a dimension, denoted by $\dim  (L)$. 
% (Note that we cannot consider .)
Furthermore, we say that  a partition $\mathcal F$ on a smooth manifold $M$ with boundary $\partial M$ is a $\Ci ^r$ singular foliation  if 
the induced partition $\hat{\mathcal F}$ of $\mathcal F$ on 
the double of $M$ (i.e.~$M\times \{0,1\}/\sim $, where $(x,0)\sim (x,1)$ for $x\in \partial M$; see Figure \ref{pic01}) is a $\Ci ^r$ singular foliation 
(the pair $(M,\mathcal F)$ is  called a $\Ci^r$-\emph{foliated manifold}), and define the dimension 
%$\dim (L)$
 of each leaf  $L\in \mathcal F$ through a boundary point as the dimension of the lifted  leaf of  $L$ in $\hat{\mathcal F}$. 
%for a singular foliation $\mathcal F$ on a smooth manifold with boundary $\partial M$, the dimension of a leaf through $x\in \partial M$  coincides with  the dimension of the lifted leaf on 
%
% is a $\Ci ^r$ singular foliation. 
The integer $\dim  (\mathcal F) := \max \{ \dim  (L) \mid L \in \mathcal F\}$ is called the \emph{dimension} of $\mathcal{F}$, and $\dim (M) -  \dim  (\mathcal F)$  the \emph{codimension} of $\mathcal{F}$. 
A leaf $L$ is said to be \emph{regular} if $\dim  (L) =\dim  (\mathcal F)$ and \emph{singular} if $\dim  (L) <\dim  (\mathcal F)$. 
The union of singular leaves is called the \emph{singular  set} and  denoted by $\mathrm{Sing} (\mathcal F)$. With a usual abuse of notation, we simply say that  $\mathcal F$ is a \emph{regular foliation}  if $\mathrm{Sing} (\mathcal F ) =\emptyset $.
%By definition, any boundary component is either transverse to the foliation or a union of orbits. 

When $\dim  (\mathcal F)=1$ (in particular, when $\mathcal F$ is a codimension one singular  foliation $\F$ on a surface $M$), we have that $\mathrm{Sing} (\mathcal F) = \bigcup _{L\in \mathcal S} L$ with $\mathcal S = \{ L \in \F \mid \# L  = 1 \}$, so that $\F - \mathcal S$ is a $\Ci ^r$ regular foliation of $M - \mathrm{Sing} (\mathcal F)$. 
A point in $\mathrm{Sing} (\mathcal F) $ is called a \emph{singularity} or a \emph{singular point}. 
A singularity $x$ of $M$ is said to be (metrically) \emph{non-degenerate} if there are a neighbourhood $U$ of $x$ and a $\Ci ^1$ vector field $A$ on $M$ such that the set of orbits of the flow generated by $A$  corresponds to $\F$ on $U$ and that $x$ is a non-degenerate singularity  of $A$ (i.e.~$A(x)=0$ and  each eigenvalue of $DA$ at $x$ is non-zero). 
Otherwise, a singularity is said to be (metrically) \emph{degenerate}. 
Notice
that, by definition, each boundary component of a foliation with no degenerate
singular points on a surface is either a circle which is transverse to the foliation
or a union of leaves. In particular, each center does not belong to the boundary.
We also note that if there is no degenerate singularities, then there are at most
finitely many non-degenerate singularities. In other words, any accumulation point
of infinitely many non-degenerate singularities need to be degenerate. 
%We denote
%by $L(x)$ the leaf through a point $x \in M$. Let d be the distance on leaves of F
%induced by the Riemannian metric of M.

We denote by $\mathcal{F}(x)$ the leaf of a foliation $\mathcal{F}$ through a point $x\in M$. 
Let $d$ be the distance on leaves of $\mathcal F$ induced by a Riemannian metric of $M$.\footnote{
Existence
of length averages (given in Definition \ref{def:HBF}) 
is independent of the choice of Riemannian structures when the manifold is compact, because any two Riemannian metrics on a compact manifold are Lipschitz equivalent (for two metrics $g_1$ and $g_2$   on $M$, a mapping $ v \mapsto  \frac{g_1(v, v )}{g_2 (v ,v )}$ on the unit tangent bundle of $M$ is continuous and strictly positive, and by compactness, it is bounded above and below by positive constants). 
%, where $UM$ is the unit tangent bundle.
On the other hand, it can depend on the choice of Riemannian structures when the manifold is
non-compact, as indicated in Example \ref{ex2} and \ref{ex3}.} 
\begin{dfn}\label{def:HBF}
For a point $x\in M$ and  a continuous function $\varphi$ on $M$, we define   the \emph{length average} of $\varphi$ at 
%along the leaf of $\mathcal F$ through 
$x$ by
\begin{equation}\label{eq:hb2}
\lim _{r\to \infty} \frac{1}{\left\vert B_r^{\mathcal F}(x)\right\vert } \int _{B_r^{\mathcal F}(x)} \varphi (y)dy,
\end{equation}
where $ B_r^{\mathcal F}(x)=\{ y\in \mathcal{F}(x) \mid d(x,y) <r\}$ and $\left\vert B_r^{\mathcal F}(x)\right\vert = \int _{B_r^{\mathcal F}(x)} dy$ is a $p$-dimensional volume of $ B_r^{\mathcal F}(x)$ with $p=\mathrm{dim} (\mathcal{F}(x))$.
If  the length average of $\varphi$ at $x$ exists for every continuous function $\varphi$, then we simply say that length averages exist at $x$.
%For short, the point $x$ at which the length average does not exist for some continuous function is called a \emph{point with historic behaviour} for $\mathcal F$.
\end{dfn}

%\marginpar{ここで定義がRiem metricによらない事も述べる必要あり？}
We emphasise that the length average of $\varphi$ for the foliation generated by a flow does not coincide with the time average of $\varphi$ for the flow in general (see Remark \ref{rmk:1} for details). 
On the other hand, it also should be noticed that these two averages coincide with each other in the regular case 
(note that we can reparametrise the flow into a flow with unit velocity, because the compactness of the surface implies that the velocities at any points of the flow are bounded and away from zero).  
%{\color{blue}ここで，time average とlength averageが違うことを述べる一方，regular caseでは，compactでnon-singularだと,(topological equivalenceをとって$C^1$と対応づけて)flowの速さの逆数を値にする関数を考えると，それをかけることflowとfoliationでの概念の同値性が証明できます．}
Finally, we would like to say that we are strongly   interested in  any physical meaning and application of  length averages.

\subsection{Foliations without length averages}\label{subsection:1.1}
For foliations on open manifolds or foliations with 
%non-pronged
 degenerate singular points,
  there are abundant examples for which some length average does not exist.  
In Section \ref{example}, we will provide with the following examples without length averages: 
\begin{itemize}
\item Trivial foliation on a Euclidean space (Example \ref{ex1}), and foliation by the (regularised) Koch curves on an open disk  (Example \ref{ex2}). 
\item  Foliation by the  (regularised) Koch curves on a compact surface with one 
%non-pronged
 degenerate singularity  (Example \ref{ex3}). 
\end{itemize}
These examples will make clear that 
the  \emph{compactness} of $M$ and the \emph{non-degeneracy} 
%or \emph{pronged-ness}
 of singularities of $\mathcal F$ might be necessary for investigating  interesting examples without length averages. Moreover, in  higher codimensional cases, we can easily construct foliations without length averages.  
We  will have the following example:
\begin{itemize}
\item Foliation generated by the suspension flow of any diffeomorphism $P$ without time averages (that is, $P$ admits a positive Lebesgue measure set consisting of points without time averages, such as the time-one map of  the Bowen's surface flow, see Remark \ref{rmk:Bowen} and Figure \ref{fig0}). We refer to Example \ref{ex4}. 
Since time averages exist everywhere  for any one-dimensional diffeomorphism (see \cite{KH95}*{\S 11} e.g.), the dimension of the phase space of $P$ has to be $\geq 2$, so that this is a
\emph{codimension two} foliation of a compact manifold with no  singularities.
\end{itemize}
%\marginpar{二重否定の文章を維持するにしても、強調すべきポイントがズレないように注意を！}

\subsection{Foliations with length averages}
In contrast to the examples in the previous subsection, 
length averages  exist at \emph{every} point for 
any codimension one $\Ci ^1$  
%orientable
 singular foliation 
  %(or any codimension one $\Ci ^1$ regular foliation) 
   on a compact surface without  
   %non-pronged
    degenerate singularities
   under a mild condition on quasi-minimal sets  (which is  a  significantly stronger conclusion than 
one can expect from 
a
straightforward dynamical analogy, as indicated in Remark \ref{rmk:1}). 
 We will also show that the conclusion holds for Lebesgue almost every point under another condition on quasi-minimal sets. 
%   if the union of quasi-minimal sets of the foliation is locally dense. 
%%\marginpar{ここで、「のみならず、all pointでlength averageが存在、と言う言い方をする」}
%%What should be emphasised here is that
%Furthermore, the conclusion holds for \emph{every} point 
% %(not only Lebesgue every point)
%  if  the union of quasi-minimal sets of the foliation is empty or uniquely ergodic 
%%  There are
  % (see Definition \ref{dfn:ue}): 
\begingroup
\setcounter{tmp}{\value{thm}}
\setcounter{thm}{0}
\renewcommand\thethm{\Alph{thm}}
\begin{thm}\label{main-thm} 
For any codimension one  $\Ci^1$ singular foliation without  
%non-pronged
 degenerate singularities  on a compact surface, the following holds:
\begin{itemize}
\item[$\mathrm{(1)}$]
Length averages exist everywhere
% for any codimension one  $\Ci^1 $ singular foliation without  non-pronged degenerate singularities  on a compact surface 
 %the conclusion holds for every point 
 if the union of quasi-minimal sets of the foliation is empty or uniquely ergodic (see  Definition \ref{dfn:ue2}). 
 % \ref{dfn:ue} for precise definition). 
% (definition given in Definition ??). 
% (given in Definition).
\item[$\mathrm{(2)}$]
Length averages exist Lebesgue almost everywhere if  the union of  quasi-minimal sets of the foliation is locally dense (see  Definition \ref{dfn:ue2}). 
% for precise definition). 
\end{itemize}
\end{thm}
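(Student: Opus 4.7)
The plan is to reduce length averages to Birkhoff averages for a continuous flow and then combine the classical Maier--Poincar\'e--Bendixson structural classification of leaves on a compact surface with classical ergodic theorems. After passing to the orientation double cover if necessary (balls lift to balls for the pulled back metric, so length averages descend), we may assume $\F$ is orientable. Non-degeneracy of the singular set then realises $\F|_{M-\mathrm{Sing}(\F)}$ as the orbit foliation of a $\Ci^1$ vector field $X$, and dividing by $\|X\|$ produces a continuous \emph{unit-speed} flow $g^t$ on $M-\mathrm{Sing}(\F)$ whose orbits are precisely the regular leaves. Under this parametrisation the length average of a continuous $\varphi$ at a regular point $x$ coincides with the two-sided time average of $\varphi$ under $g^t$, so the whole problem reduces to a Birkhoff-type question for $g^t$.

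The classification of leaves then gives the following exhaustive list. A singular leaf gives length average $\varphi(x)$ trivially. A closed regular leaf gives the arc-length average of $\varphi$ over the leaf. A non-closed leaf whose $\omega$- and $\alpha$-limit sets both consist of singularities (a separatrix connection) has compact closure of finite total arc length, because non-degeneracy forces every orbit approaching a singularity to reach it in finite arc length; balls then eventually exhaust the closure and the length average is the arc-length average. A non-closed leaf whose $\omega$- or $\alpha$-limit set is a closed orbit or a polycycle $\Gamma$ is handled by a standard Poincar\'e-section / shadowing argument: each near-revolution contributes approximately $|\Gamma|$ to arc length and approximately $\int_\Gamma \varphi\,dy$ to the integral, so the length average exists and equals $|\Gamma|^{-1}\int_\Gamma \varphi\,dy$. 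The remaining leaves lie in the union $\mathcal Q$ of quasi-minimal sets, where the theorem's hypothesis is used.

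For (1), if $\mathcal Q=\emptyset$ the classification is already complete; if $\mathcal Q$ is uniquely ergodic, the unique invariant probability can have no atoms (otherwise a Dirac mass at a singularity would contradict uniqueness), so $\mathcal Q \subset M - \mathrm{Sing}(\F)$ is a compact invariant set of the continuous flow $g^t$. The classical criterion for unique ergodicity of continuous flows on compact metric spaces then yields \emph{uniform} convergence of time averages to $\int \varphi\,d\mu$ on $\mathcal Q$, giving length averages at every point of $\mathcal Q$ and completing (1). For (2), $\mathcal Q$ is only locally dense, so several ergodic invariant measures may coexist; the plan is to apply Birkhoff's pointwise ergodic theorem to each ergodic component of $g^t$ and then use a Fubini-type argument in $g^t$-flow boxes transverse to $\F$ to convert ``full one-dimensional Birkhoff set along each leaf'' into ``full two-dimensional Lebesgue set in $\mathcal Q$'', with local density of $\mathcal Q$ guaranteeing that countably many such flow boxes cover $\mathcal Q$ up to a Lebesgue-null set. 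The main obstacle I anticipate is precisely this transverse Fubini step, because the conditional transverse measures of the invariant measures need not be absolutely continuous with respect to Lebesgue (cf.\ Denjoy-type transversals); combining non-degeneracy of singularities with local density of $\mathcal Q$ to rule out such pathologies is the substantive technical work. A secondary, much milder issue is that $g^t$ is only defined off the finitely many non-degenerate singularities, which is controlled by excising small neighbourhoods of them and using the finite-arc-length estimates already obtained.
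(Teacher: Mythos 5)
Your overall architecture (reduce to an orientable flow foliation, classify $\omega$-limit sets \`a la Poincar\'e--Bendixson, treat periodic orbits/circuits by a Poincar\'e section and quasi-minimal sets by ergodic-theoretic hypotheses) is the same as the paper's, but there is a genuine gap at the heart of part (1). You claim that a uniquely ergodic union $\mathcal Q$ of quasi-minimal sets satisfies $\mathcal Q\subset M-\mathrm{Sing}(\F)$, so that the unit-speed flow $g^t$ is a continuous flow on the compact set $\mathcal Q$ and the classical ``unique ergodicity implies uniform convergence of time averages'' criterion applies. This is false in general: a quasi-minimal set is the \emph{closure} of a non-closed recurrent orbit and on surfaces of genus $\geq 2$ it typically contains non-degenerate saddle points (think of the vertical foliation of a translation surface, whose quasi-minimal set is the whole surface including the cone points). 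At such a saddle the unit-speed flow is undefined and incomplete (separatrices reach the saddle in finite arc length), so $g^t$ is not a continuous flow on a compact metric space and the criterion you invoke does not apply; moreover the Dirac mass at the saddle is an invariant measure of the original flow supported in $\mathcal Q$, so ``unique ergodicity of the flow on $\mathcal Q$'' in your sense would essentially never hold. The paper's Definition \ref{dfn:ue} sidesteps this by defining unique ergodicity through the \emph{associated interval exchange transformation} of the Poincar\'e return map on a transverse circle (via Gutierrez's structure theorem), and the substantive technical input is Lemma \ref{lem:boundedlength}: the arc length of the return segment $\gamma_{\tilde x}$ depends continuously on $\tilde x$, even though the return \emph{time} blows up near separatrices. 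That continuity is what converts existence of time averages for the return map into existence of length averages along the orbit; your proposal has no substitute for it.

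Two further, more repairable, omissions. First, your reduction to flows uses only the orientation double cover of the surface; but a one-dimensional foliation is generated by a flow iff the \emph{foliation} is orientable (Lemma \ref{FF}), so you also need the tangent-orientation double cover of $\F$ (and a doubling along $\partial M$, since compact surfaces with boundary are allowed), together with a check that length averages descend through each covering. Second, for part (2) your transverse Fubini step is left open; the paper's argument is simpler than what you anticipate: local density is \emph{defined} as injectivity of the semi-conjugacy $h$ to the IET, the IET preserves Lebesgue measure on the transversal, so Birkhoff gives a full-measure good set on the transversal, and the bad set in the surface is the backward flow-saturation of a null subset of a one-dimensional transversal, hence Lebesgue null --- no absolute continuity of conditional measures is needed.
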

%\begin{cor}\label{main-thm01b}
%The set of points with historic behaviour for any  codimension one  $\Ci^1 $ regular foliation on a compact surface is empty (in particular, a zero Lebesgue measure set).
%\end{cor}
\endgroup
%\setcounter{thm}{\thetmp}
   %for precise definition):  
  We will see in Remark \ref{rmk:r121} that every  quasi-minimal set is uniquely ergodic  for ``almost every'' foliation on a compact surface, and for  every foliation on a compact surface with orientable genus less than two or non-orientable genus less than four. 
%%for which the number of  genus  is less than two (orientable case) or less than four (non-orientable case) 
%%\marginpar{genusの数についてはもう少し真面目に評価を} 
%%\footnote{
%% \ref{rmk:1}.\footnote{
%%$\Ci ^2$ regularity in the  item (1) of Theorem \ref{main-thm} is a  condition to avoid ``Denjoy'' case, which can be managed 
%Furthermore, exceptional quasi-minimal sets  can be managed when  the union of quasi-minimal sets is uniquely ergodic, and thus we need not to  require the union of quasi-minimal sets  non to be exceptional   in the item (1).
%%
%%The item (2) of Theorem \ref{main-thm} may be supplementary because exceptional quasi-minimal sets  can be managed  when the union of quasi-minimal sets is uniquely ergodic, and thus we need not to  require the union of quasi-minimal sets  non to be exceptional   in the item (2). 
%See Proposition \ref{IET2} and \ref{IET2r2} for detail.
%%}
%See Remark   for detail. 
%We also note that the item (2) of Theorem \ref{main-thm} is rather supplemental because one can easily manage   exceptional (i.e.~not locally dense) quasi-minimal sets  
%%quasi-minimal sets
%if time averages of the ``associated interval exchange transformation'' exist everywhere, which is satisfied in  the item (1), 
% %of Theorem \ref{main-thm} can be exceptional (i.e.~not locally dense),
%  see the proof of Proposition \ref{IET2r2} for detail.

\begin{remark}\label{rmk:1}
One  can see  an essential difference between  time averages and length averages for the Bowen's classical surface flow, which heuristically explains the strong contrast between  Theorem \ref{main-thm}  and time averages  of surface flows. 
% not a  generalisation of the classical theorem by Denjoy and Sigel  for toral flows at all.  
Let $\mathcal F$  be a  singular foliation given as the set  of all orbits of a continuous flow $F$, i.e.~each leaf $\mathcal{F}(x)$  of $\mathcal F$ through $x$  is given as  the orbit $O(x):=\{ f^t(x) \mid t\in \R \}$ of $x$. We call $\mathcal F$ the \emph{singular foliation generated by $F$} (such singular foliations will be intensively studied  in Section \ref{proof}).
 Let $F$ be the  Bowen's  flow   on a compact surface $S$,  and $\mathcal F$ the singular foliation generated by $F$. 
Then, it follows from    \cite{Takens1994} that there exists a positive Lebesgue measure set $D\subset S$ such that for any $x\in D$, the time average  $\lim _{T\to \infty}\frac{1}{2T} \int ^T_{-T} \varphi (f^t(x)) dt$ of some continuous function $\varphi :S \to \R$ along the orbit $O(x)$ of $x$   does not exist (by virtue of the non-existence of the time average \eqref{def:hb} of some continuous function $\varphi $ along the forward orbit $O_+(x) :=\{ f^t(x) \mid t\in \R _+\}$ and by the convergence of  $f^{-t}(x)$ to a repelling point  as $t\to \infty$).
On the other hand, since $\mathcal F$ is a codimension one singular foliation with no degenerate singularities  on the compact surface $S$ and it does not have any quasi-minimal set, 
 %(see Remark \ref{rmk:Bowen}), 
 we can apply Theorem \ref{main-thm}: for any $x \in D$, the length average  \eqref{eq:hb2} of any continuous function  $\varphi : M \to \R$ along the leaf $\mathcal{F}(x)=O(x)$ through $x$  \emph{does} exist. 
See Remark \ref{rmk:Bowen} for detail.
Compare also with Example \ref{ex4}.
 \end{remark}

%(and Corollary \ref{main-thm01b})
%\marginpar{片側と両側の問題があるので若干微妙か？}
As an  application of the item (1) of Theorem \ref{main-thm} together with Remark \ref{rmk:r121}, 
% together with a remark after Definition \ref{def:HBF}, 
we can immediately obtain the following corollary for regular foliations: 
% well-known result  (cf.~\cites{Tam} and \cite[\S 11]{KH95}). 
%, Katok-Hasselblatt}.
%\marginpar{well-knownと言ってもfolkloreのような？}

\begingroup
\setcounter{thm}{1}
\renewcommand\thethm{\Alph{thm}}
\begin{cor}\label{main-thm01b}
Length averages exist everywhere for any codimension one  $\Ci^1 $ regular foliation   on a compact surface. 
%Time averages exist everywhere 
%%(with respect to both time averages and length averages) 
%for any $\Ci^1 $ flow without singular points on a compact surface.  
%% (in particular, a zero Lebesgue measure set).
\end{cor}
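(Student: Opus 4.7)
The plan is to reduce Corollary \ref{main-thm01b} directly to item (1) of Theorem \ref{main-thm}. Since a regular foliation $\mathcal F$ has empty singular set by definition, the hypothesis concerning the non-degeneracy of singular points is trivially satisfied. Thus the only remaining task is to verify that the union of quasi-minimal sets of $\mathcal F$ is either empty or uniquely ergodic in the sense of Definition \ref{dfn:ue2}, and for this the key ingredient is Remark \ref{rmk:r121}.

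The topological observation is that a codimension one $\Ci^1$ regular foliation on a compact surface $M$ is determined by a nowhere-vanishing continuous tangent line field on $M$. Passing to the orientation double cover if $M$ is non-orientable, one obtains a continuous nowhere-vanishing vector field, and the Poincar\'e--Hopf index theorem forces $\chi(M)=0$. By the classification of closed surfaces, the only options are the torus (orientable genus $1$) and the Klein bottle (non-orientable genus $2$). Both fall within the scope of Remark \ref{rmk:r121}, whose hypothesis requires orientable genus strictly less than $2$ or non-orientable genus strictly less than $4$. Consequently every quasi-minimal set of $\mathcal F$ is uniquely ergodic; combined with Maier-type classification results, which on these two surfaces bound the number of distinct quasi-minimal sets by one, the union of quasi-minimal sets is either empty or a single uniquely ergodic set, and the hypothesis of Theorem \ref{main-thm}(1) is verified.

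The argument is essentially a verification of hypotheses, so no substantive obstacle arises within the corollary itself; all of the real work is carried by Theorem \ref{main-thm}(1) together with the foliation-theoretic results summarised in Remark \ref{rmk:r121}. A mild point of care concerns compact surfaces with non-empty boundary, where the classification step must be carried out on the double $\hat M$ introduced in the paper, using the fact that each boundary component of $\mathcal F$ is either transverse to $\mathcal F$ or a union of leaves. After this reduction the same topological and ergodic-theoretic inputs apply and yield the conclusion.
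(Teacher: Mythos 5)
Your proposal is correct and follows essentially the same route as the paper's proof: Poincar\'e--Hopf forces $\chi(S)=0$, restricting $S$ to the torus, annulus, M\"obius band or Klein bottle (the latter two handled via doubling/covering), after which the Mayer--Markley bounds and the Denjoy--Siegel theorem (packaged in Remark \ref{rmk:r121}) show the union of quasi-minimal sets is empty or uniquely ergodic, so Theorem \ref{main-thm}(1) applies. The only cosmetic difference is that the paper lists the four surfaces directly rather than first reducing to closed surfaces, and it concludes emptiness of quasi-minimal sets for everything but the torus straight from the genus bounds.
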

\endgroup
%\setcounter{thm}{\thetmp}

%We note that 
One may realise that Corollary \ref{main-thm01b} is rather a 
%natural 
generalisation of classical theorems by Denjoy and Siegel for \emph{toral} flows without singularities 
%to surface flows without singularities 
%comparing with
(this is contrastive to the non-regular case as in   
%Theorem \ref{main-thm} and
 Remark \ref{rmk:1}). 
% (recall that the existence of time averages at every point for any surface flows without singularities is an immediate consequence of  their results).  
%(existence of time averages everywhere for surface flows without singularities).  
In fact, existence of time averages at every point for surface flows (in particular, toral flows) without singularities is 
 %an immediate conclusion from  Denjoy-Siegel Theorem (cf.~\cite[\S 1]{Tam}, \cite[\S 11]{KH95}), and 
%is also 
a direct consequence of Corollary \ref{main-thm01b} 
by the remark after Definition \ref{def:HBF} for regular foliations. 
%, it immediately follows from Corollary \ref{main-thm01b} that  time averages everywhere for surface flows without singularities.
%In this sense,  Corollary \ref{main-thm01b} is a 
This similarity between flows and foliations  seems to be reminiscent of the previous works 
%by Ghys, Langevin and Walczak among others 
 for 
%geometric entropies of foliations and  topological entropies of associated flows
connection between the geometric and dynamics of foliations (such as the geometric entropy of a foliation and the topological entropy of the associated 
%unit tangent 
holonomy group  
 %(see e.g.~
  \cite{Ghys1988}*{Theorem 3.2}). 
%\marginpar{可能であれば、holonomyもしくはreturn mapという単語を出したい；問題がないかGhys1988をチェック}
%: the topological entropy of the unit tangent flow to the foliation $\mathcal{F}$ coincides with the double of the geometric entropy of $\mathcal{F}$. 
Furthermore,  existence of time averages for the group generated by the ``Poincar\'e map'' of foliations plays an important role  in the proof of Theorem \ref{main-thm} (see Subsection \ref{subsection:sf}),  
%especially Lemma \ref{IET2r2}), 
and thus, 
 it is likely that analysis of holonomy group is also crucial  for analysis of length averages.
%However, we have no idea whether analysis for length averages substantially relates  with analysis for any known dynamical concept for foliations (such as entropy and Hausdorff dimension CITE), because length averages are defined in tangential direction while their concepts are defined in transverse direction. 
%\marginpar{この1文は推敲$\to$今回の証明でもそうだが、明らかにholonomy mapのtime averageに帰着させている；何らかの関係がある可能性は小さくない文章の書き変えを}
%\marginpar{どこかに必ず、Birkhoff's ergodic theoremのlength average versionを提案: 定義の真下か、Problem 3の真下かか; 後者の方が良さそう}
%
%\marginpar{対応していないような？こちらのremarkはremark modeを外して、逆にtangential云々と関わるprob 4以下の話をremarkにした方がいいか？どちらもreferee A対策なので、1つのremarkにまとめるのもいいかも？regular foliationの場合は、time averageとlength averageに本質的な差はない、と言う話か}
%\begin{remark}
%When a one-dimensional orientable $\Ci^1$ foliation $\mathcal{F}$ is regular, Corollary \ref{main-thm01b} is reminiscent of Theorem 3.2 of \cite{Ghys1988}: the topological entropy of the unit tangent flow to the foliation $\mathcal{F}$ coincides with the double of the geometric entropy of $\mathcal{F}$. 
%\end{remark}
(Technically, moreover, the difference between time averages and length averages is closely related with the  first return time to a cross-section, see Remark \ref{rmk:Bowen}.)

According to the   examples in the previous subsection  without length averages and Theorem \ref{main-thm}, 
% and Corollary \ref{main-thm01b}, 
we may naturally  ask the following problem:
\begin{problem}\label{problem}
%Let $\mathcal F$ be . 
%Then, 
Does  length averages exist 
%Lebesgue almost
 everywhere for any  codimension one 
%$\Ci ^1$ 
 foliation on a compact smooth Riemannian manifold?
\end{problem}

Theorem \ref{main-thm} is a satisfactory but not complete answer to Problem \ref{problem} for  foliations on compact surfaces. 
One may  ask whether length averages exist  everywhere without the assumption on  quasi-minimal sets in the item (1) of Theorem \ref{main-thm}.
% version of  remove the assumption of orientability for foliation in Theorem \ref{main-thm}. 
%One may realise that t
This can be reduced to  existence problem of time averages for \emph{any} interval exchange transformations, 
 %tangent orientation coverings for singular foliations, 
 %compare with
  see Remark \ref{rmk:r121}
  and 
 % Definition 
 %\ref{dfn:ue} and
  the proof of Proposition \ref{IET2r2}. 
In higher dimensions, 
%On the other hand, 
statistical behaviours of foliations  are 
more complicated   due to  the number of ends:  in surfaces the numbers of ends of foliations are either zero or two (since 1-D paracompact manifolds are either the circle or the real line), while in higher dimensional manifolds they can be infinite.  
Therefore, investigating codimension one foliations with infinite ends (e.g.~Hirsch foliations \cite{Hirsch1975} or Sacksteder foliations \cite{Sacksteder1964}) would also be a first step to the answer of Problem \ref{problem}. 
%We also note that investigating these foliations  requires analysis in transverse direction, such as some averages for holonomy group.
%\marginpar{要推敲: 概念的にはtangentialとtransverseで関係しなさそうだが、この例を見る限りでは関係してそう？}
We again note that for analysis of length averages, it seems important  to understand the dynamics of associated holonomy groups, so recent development in ergodic theory of codimension-one foliations and   group actions by circle diffeomorphisms (e.g.~\cite{DKN2018})  might be helpful, 
 %to answer Problem \ref{problem},
  see also Example \ref{ex4} and Remark \ref{rmk:ex4}. 
%
%Furthermore,  vaguely 
%%As an example, in the end, 
%we would like to ask if dynamical analogies hold for length averages. 
%For example, it is of great interest whether  one can obtain a version of Birkhoff's ergodic theorem for length averages.

\begin{remark}\label{rmk:regularity}
 %\ref{rmk:orientability} 
% for detail. 
% \marginpar{ここはもう少し真面目に問題化すべきか}
 %
%
%We  would like to
It is natural to ask whether or not further dynamical analogies hold for length averages of foliations, such as a version of Birkhoff's ergodic theorem for length averages. 
 Furthermore, 
it seems  interesting to investigate a generalisation of  Theorem \ref{main-thm} (and Corollary \ref{main-thm01b}) to $\Ci ^0$ singular foliations in an appropriate sense. 
We assumed singular foliations in Theorem \ref{main-thm} to be $\Ci ^1$,  partly because $\Ci ^1$ makes it possible to define  the length $r$ in each leaf, which is needed to make sense Definition \ref{def:HBF}. 
It is not difficult to  define length averages for \emph{piecewise} $\Ci ^1$ singular foliations, including foliations by the classical Koch curves (see Example \ref{ex3}).
Possibly, one can even define length averages for $\Ci ^0$ singular foliations if one considers, instead of a length $r$ and  the ball $B_r^{\mathcal F}(x)$ in $\mathcal{F}(x)$, a number $N$ and the set of plaques $\ell $ of $\mathcal{F}(x)$ satisfying that  there are $N$    plaques  $ \tilde \ell _1, \ldots , \tilde \ell _N$ such that  $x\in \tilde \ell _1$, $\ell = \tilde \ell _N $ and $\tilde \ell _j\cap \tilde \ell _{j+1} \neq \emptyset$ (and takes the infimum  over all foliated atlases, if necessary). 
We think that  this definition would help connecting length averages for foliations to time averages for  their  ``Poincar\'{e}  map'', as  in   Example \ref{ex4}.
We also note that in  the argument of the proof of Theorem \ref{main-thm} and Corollary \ref{main-thm01b},  the $\Ci ^1$ regularity is (essentially) necessary only in Lemma \ref{lem:boundedlength}. 
%
%
%Finally,
\end{remark}

\section{Proof of Theorem \ref{main-thm} and Corollary \ref{main-thm01b}}\label{proof}

%First we show a following key lemma in Subsections \ref{subsection:sf} and \ref{subsection-2}. 
%Second we show Theorem \ref{main-thm} using the key lemma in Subsection \ref{proof-main}. 
%
%\begin{lem}\label{main-thm01a}
%Length averages exist everywhere for any codimension one  $\Ci^1 $ orientable singular foliation without non-pronged degenerate singularities on a compact surface. 
%\end{lem}

%To show the main results, we define $\Ci ^1$ singular foliation without non-topologically-pronged degenerate singularities as follows: Let $S$ be a branched covering of a compact surface with branched set $E$. 
%A singular point $x\in S$ is called a \emph{topological prong} if there is a neighbourhood of $x$ which corresponds to $f^{-1}(U)$ as a foliated surface (i.e.~a pair of a surface and a foliation on the surface) for a neighbourhood of a $k$-prong for some $k$ and for a branched covering $f(r e^{i\theta}) = r e^{i\theta l} \in \C$ for some positive integer $l$. 
%Then a $\Ci ^1$ singular foliation $\mathcal{F}$ on $M$ without non-topologically-pronged degenerate singularities is a continuous singular foliation which is $\Ci ^1$ on $M - \mathrm{Sing} (\mathcal F)$ and each singular point is either non-degenerate or a topological prong whose branched covering is $\Ci ^1$. Note that a $\Ci ^1$ singular foliation $\mathcal{F}$ on $M$ without non-topologically-pronged degenerate singularities is of $\Ci ^1$ except topological prongs but the degenerate singular points satisfy the regularity condition. 

\subsection{Singular foliations generated by flows}\label{subsection:sf}
We shall deduce Theorem \ref{main-thm}  from the following theorem for length averages of singular foliations generated by 
%$\Ci ^1$
 surface flows, which is our goal in this subsection.
\begin{thm}\label{main-thm02}
 For any $\Ci ^1$ singular foliation generated by a $\Ci ^1$ flow without 
 %non-topologically-pronged
  degenerate singular points on an orientable compact surface, the following holds:
\begin{itemize}
\item[$\mathrm{(1)}$] Length averages exist   everywhere  if the union of quasi-minimal sets of the flow is empty or uniquely ergodic (see Definition \ref{dfn:ue}). 
% for precise definition).
\item[$\mathrm{(2)}$] Length averages exist  Lebesgue almost everywhere if  the union of  quasi-minimal sets of the flow is locally dense (see Definition \ref{dfn:ue}). 
%  for any $\Ci ^1$ orientable singular foliation generated by a $\Ci ^2$ flow without non-topologically-pronged degenerate singular points on an orientable compact surface if 
\end{itemize}
%Furthermore, the conclusion holds for every point if 
\end{thm}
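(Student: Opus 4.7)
The plan is to reduce length averages on leaves of $\mathcal F$ to time averages of the unit-speed reparametrization of the generating flow $F$, and then to apply the structure theory of orientable surface flows with isolated singularities. Let $v$ be the $\Ci^1$ vector field generating $F$. On any regular leaf $\mathcal F(x) = O(x)$, the arc-length parametrization $s \mapsto y(s)$ satisfies $ds = \|v(f^t(x))\|\,dt$, so whenever the leaf has infinite two-sided arc length,
\begin{equation}
\frac{1}{|B_r^{\mathcal F}(x)|}\int_{B_r^{\mathcal F}(x)}\varphi(y)\,dy
\;=\; \frac{1}{2r}\int_{-r}^{r} \varphi(y(s))\,ds,
\end{equation}
which is exactly the two-sided time average of $\varphi$ for the unit-speed reparametrized flow $\widetilde F$ defined on $M \setminus \mathrm{Sing}(\mathcal F)$.

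I would then classify each leaf $\mathcal F(x)$ by its arc-length type. If $\mathcal F(x)$ is a singular point or a periodic orbit the length average is immediate. If $\mathcal F(x)$ is a non-closed regular leaf whose $\alpha$- and $\omega$-limit sets both lie in $\mathrm{Sing}(\mathcal F)$, then, since non-degeneracy of each singularity forces it to be reached in \emph{finite} arc length (while taking infinite time, as in the Bowen example of Remark \ref{rmk:1}), the set $B_r^{\mathcal F}(x)$ stabilises to the whole leaf for $r$ large, and the length average reduces to the total integral of $\varphi$ over the leaf divided by the leaf's total length. Otherwise at least one end of the leaf accumulates on a compact subset of $M\setminus \mathrm{Sing}(\mathcal F)$ with infinite arc length; by the Poincar\'e--Bendixson--Maier theorem for orientable surface flows with finitely many singular points, the corresponding $\omega$- (or $\alpha$-) limit set is either a closed orbit, a singular graph, or a quasi-minimal set.

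It then remains to prove convergence of the one-sided length average in each of these cases. For an end limiting onto a closed orbit or a cycle of separatrices, a standard analysis on a transverse Poincar\'e section shows that the arc length traversed in the $n$-th passage tends to the length of the limit cycle, so the Ces\`aro arc-length average converges to the natural average over the limiting orbit/graph. For an end whose limit set is a quasi-minimal set $Q$, in case (1) $Q$ is uniquely ergodic for $\widetilde F$, so the time averages of any continuous $\varphi$ converge uniformly on $Q$, and a transversal-section argument then propagates convergence to every point asymptotic to $Q$. In case (2), for Lebesgue-a.e.\ $x$ whose orbit is non-trivially recurrent, orbit closures lie in the locally dense union of quasi-minimal sets, and Birkhoff's theorem applied to $\widetilde F$ with respect to any invariant Radon measure supplied by the quasi-minimal sets gives convergence almost everywhere.

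The hard part will be the delicate interplay between time and arc length in neighbourhoods of the non-degenerate singularities: a recurrent regular orbit may pass arbitrarily many times through shrinking neighbourhoods of saddles, spending unbounded time but contributing bounded arc length per passage, so unit-speed time is a highly non-trivial reparametrization of the original time and Birkhoff's theorem for $F$ itself is not directly available. One must therefore establish a quantitative bound on the arc length accumulated in each such passage in terms of the induced holonomy on a transverse section --- this is where the $\Ci^1$ hypothesis enters, as signalled in Remark \ref{rmk:regularity} --- and then rewrite the arc-length Birkhoff sum as a weighted Birkhoff sum for the Poincar\'e return map, to which unique ergodicity (resp.\ local denseness) of the quasi-minimal sets can be applied.
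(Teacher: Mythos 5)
Your overall architecture is the one the paper uses: classify $\omega$-limit sets by the Poincar\'e--Bendixson theorem for flows with finitely many singularities, dispose of leaves of finite total arc length using non-degeneracy of the singularities, and in the remaining cases rewrite the arc-length average as a ratio of Birkhoff sums for the Poincar\'e return map on a transverse section, with the arc length of each return segment serving as a weight. Your closing observation --- that the $\Ci^1$ hypothesis is needed precisely to control the arc length accumulated per passage near a saddle, because the weight must be a continuous function on the section --- is exactly the content of the paper's Lemma \ref{lem:boundedlength}, and the periodic-orbit/circuit case is handled as you describe: the returns converge monotonically to a boundary point of the section, so both the weights and the integrated test function converge (Proposition \ref{IETa}).

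Two steps in the quasi-minimal case are genuinely incomplete as written. First, in case (1) you cannot invoke ``unique ergodicity of $Q$ for $\widetilde F$'' and uniform convergence of time averages on $Q$: a quasi-minimal set contains saddle points, so the restricted flow is never uniquely ergodic in the literal sense, and in Definition \ref{dfn:ue} unique ergodicity is by definition a property of the associated \emph{interval exchange transformation}, to which the return map $P$ is only \emph{semi-}conjugate --- the semi-conjugacy $h$ may collapse countably many closed intervals to points. One must still show that unique ergodicity of the IET forces unique ergodicity of $P$ itself; the paper does this (Proposition \ref{IET2r2}) by pushing forward an arbitrary $P$-invariant measure, identifying its image with Lebesgue measure on the circle, and concluding that the collapsed intervals are null. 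Second, in case (2), applying Birkhoff's theorem to $\widetilde F$ ``with respect to any invariant Radon measure'' yields convergence almost everywhere \emph{with respect to that measure}, which is not the Lebesgue-a.e.\ statement required. The paper instead uses that in the locally dense case $h$ is injective, so $P$ is conjugate to a minimal IET, which preserves Lebesgue measure on the section; Birkhoff's theorem then produces a Lebesgue-null exceptional set in the section whose saturation under the flow is Lebesgue-null in the surface (Proposition \ref{IET2}). With these two repairs your plan coincides with the paper's proof.
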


Let $F$ be  a $\Ci ^1$ flow without 
%non-topologically-pronged
 degenerate singularities on an orientable  compact surface $S$. 
An orbit of a point $x$ is called \emph{recurrent} if $x \in \omega(x) \cup \alpha(x)$, where $\omega(x) := \bigcap_{s\in \mathbb{R}}\overline{\{f^t(x) \mid t > s\}}$ and $\alpha(x) := \bigcap_{s\in \mathbb{R}}\overline{\{f^t(x) \mid t < s\}}$ (called  \emph{$\omega$-limit set} and  \emph{$\alpha$-limit set} of $x$, respectively).
Recall that a  \emph{quasi-minimal set} is an orbit closure of a non-closed recurrent orbit. 
%We call the union of all quasi-minimal sets the quasi-minimal part of $F$.
We say that a subset of $S$ is a \emph{circuit}  if it is  an image of a circle composed of a finite number of singularities together with homoclinic and heteroclinic orbits connecting these singularities.
A circuit $\gamma$  is said to be  \emph{attracting} if there is a continuous mapping $q$ from $[0,1] \times \mathbb{S}^1$ to a  neighbourhood of $\gamma$ such that $q(0,\cdot ) $ is a continuous mapping from the unit circle $\Sone$ to the circuit, and that  $q((0,1] \times \mathbb{S}^1)$ is an  embedded annulus satisfying that $\bigcup_{t \geq 0} f^t(q((0,1] \times \mathbb{S}^1)) \subset q((0,1] \times \mathbb{S}^1)$. The image of $q$ is called a collar of attraction of $\gamma$. 
 We start from the classification of limit sets in (a generalisation of) Poincar\'e-Bendixson theorem. 
\begin{thm}[Theorem 2.6.1 of \cite{Nikolaev-Zhuzhoma}]\label{Poincare-Bendixson}
Each $\omega$-limit set of a $\Ci ^1$ flow $F$ with finitely many singular points on a compact surface is one and only one of the following four types: a singular point, a periodic orbit, an  attracting circuit, or a quasi-minimal set.
\end{thm}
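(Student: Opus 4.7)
The plan is to classify $\omega(x)$ by a case analysis, exploiting the three standard general facts that $\omega(x)$ is non-empty (by compactness of $S$), closed, invariant, and connected. I would split into four exhaustive scenarios according to what $\omega(x)$ contains: (i) only singular points; (ii) a regular periodic orbit; (iii) a non-closed recurrent regular orbit; (iv) only regular non-recurrent orbits together with singular points. Cases (i)--(ii) are the easy ones: in (i), connectedness of $\omega(x)$ together with finiteness of $\mathrm{Sing}(F)$ forces $\omega(x)$ to be a single singular point; in (ii), the classical Poincar\'e--Bendixson monotonicity argument on a small transverse section $\Sigma$ through a point of the periodic orbit $\gamma$ shows that any accumulation of $O(x)$ on $\Sigma$ must be at a single point, hence $\omega(x)=\gamma$.

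For case (iii), suppose $y\in\omega(x)$ is regular, non-periodic and satisfies $y\in\omega(y)$. By definition $\overline{O(y)}\subset\omega(x)$ is a quasi-minimal set, and I would upgrade this to equality. Take a small transverse arc $\Sigma$ at $y$; recurrence of $y$ supplies a sequence of return times $t_n\to\infty$ with $f^{t_n}(y)\in\Sigma$ converging to $y$. Because $S$ is orientable, the successive intersections of $O(y)$ with $\Sigma$ are monotone on $\Sigma$, so nearby transverse slices are swept out in a controlled, non-interleaving way. Any additional regular orbit in $\omega(x)$ would have to intersect $\Sigma$ too (by invariance and the non-singular hypothesis), and the monotonicity then forces its intersections to be trapped in the same nested pattern as those of $O(y)$, which together with the closedness of $\omega(x)$ gives $\omega(x)\subset\overline{O(y)}$.

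For case (iv), every regular orbit $O(y)\subset\omega(x)$ is non-recurrent, so applying cases (i)--(iii) to $y$ forces both $\omega(y)$ and $\alpha(y)$ to lie in $\mathrm{Sing}(F)$; by finiteness and connectedness of limit sets these are single singular points. Thus $\omega(x)$ is a finite union of singularities linked by heteroclinic (or homoclinic) orbits, and connectedness together with the $1$-dimensional local structure of orbit closures in a surface upgrades this union to a closed circuit $\gamma$. To produce the collar of attraction, I would pick a regular point $z\in\gamma$ with a short transverse interval $\tau$; since $O(x)$ accumulates on $z$, its successive hits $\{f^{s_n}(x)\}\subset\tau$ are, by orientability, monotone and converge to $z$. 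The orbit arcs between consecutive hits, closed by subarcs of $\tau$, bound an embedded annulus $A_n$ with $A_{n+1}\subset A_n$ and $\bigcap_n \overline{A_n}=\gamma$. Concatenating these annuli and reparametrising gives the required map $q:[0,1]\times\mathbb{S}^1\to S$ with the forward-invariance property.

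The hard part is case (iii): the core analytic input is the monotonicity of the first-return map on a transverse section, which relies decisively on the orientability of $S$, and some care is required to convert monotonicity into the \emph{global} statement $\omega(x)\subset\overline{O(y)}$ rather than a merely local trapping. Closely related is the collar construction in case (iv), where one must verify that the annuli $A_n$ are genuinely embedded (no self-intersections across different passages near the circuit); this is where the hypothesis that the flow has only finitely many singularities, and the absence of wild recurrence inside $\omega(x)$, are used in an essential way.
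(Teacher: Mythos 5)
The paper does not actually prove this statement: it is quoted as Theorem 2.6.1 of Nikolaev--Zhuzhoma and used as a black box, so your proposal has to be judged against the argument in that reference rather than against anything in the text. Your four-way decomposition is reasonable, and cases (i) and (ii) are essentially fine (for (ii) one only needs the local Poincar\'e return map of the periodic orbit, whose iterates on an interval are automatically monotone, so no global separation argument is required there).

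The genuine gap is the tool you lean on in cases (iii) and (iv): the claim that, because $S$ is orientable, the successive intersections of an orbit with a transverse arc $\Sigma$ are monotone along $\Sigma$. That monotonicity is the classical Poincar\'e--Bendixson lemma, and its proof needs the Jordan curve theorem: the closed curve made of an orbit segment plus a subarc of $\Sigma$ must separate the surface. This holds on the sphere but fails on every surface of positive genus --- for a linear irrational flow on the torus the successive hits of an orbit on a short transverse arc form an orbit of a three-interval exchange transformation, which is dense in the arc and certainly not monotone. Orientability does not rescue this; the failure of monotonicity is precisely \emph{why} quasi-minimal sets exist, so your main tool breaks exactly in the case it is meant to handle. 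A correct proof that a non-trivially recurrent orbit $O(y)\subset\omega(x)$ forces $\omega(x)=\overline{O(y)}$ is a Cherry/Maier-type theorem and goes through the structure theory of quasi-minimal sets (e.g.\ the Gutierrez--Maier description of the return map to a transverse \emph{circle} as semi-conjugate to a minimal interval exchange), not through nested trapping regions. Secondary problems: the theorem as stated does not assume $S$ orientable, so you would at least need to pass to a double cover; in case (iii) the assertion that every regular orbit of $\omega(x)$ must meet your chosen $\Sigma$ is unjustified and is close to being the statement you are trying to prove; and the embeddedness of the annuli $A_n$ in your collar construction tacitly invokes the same separation property that fails in positive genus.
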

It is obvious that an analogous statement for $\alpha$-limit sets also holds due to Theorem 2.6.1 of \cite{Nikolaev-Zhuzhoma}. 
We will show Lemma \ref{main-thm02} according to the  four cases of $\omega (x)$ (and $\alpha (x)$) in  Theorem \ref{Poincare-Bendixson}. 
For convenience, we say that  the limit in \eqref{eq:hb2} for a continuous function $\varphi :S\to \R$ with $B^{\mathcal F}_r(x)$ replaced by \[ B^+_r(x) = \{ y\in O_+(x) \mid d(x,y) <r\} \] is the \emph{length average of $\varphi$ along the forward orbit of $x$}, where $d$ is the distance on $O_+(x)$.

As mentioned in Remark \ref{rmk:1}, we need to note that the time average of a continuous function  along an orbit does not coincide with the length average of the function along the orbit in general. 

%\marginpar{A one-dimensional manifold $\Sigma$? open/closed segment or circle? Either $\omega (x)$ is per or attracting circuit, one can find a open segment such that.. $\to$ proposition for such situation}
We say that a subset $\Sigma$ of $S$ is  a  \emph{cross-section}  of $F$ if $\Sigma$ is  either a  (closed or open) segment  or a circle such that $F$ is transverse to $\Sigma$ and the \emph{first return time} $T_x$ of $x\in \Sigma$ to $\Sigma$ (i.e. the positive  number $t$ such that 
$ f^t(x) \in \Sigma$ and  $ f^s(x) \not\in \Sigma$ for all $0<s<t$) is well defined and finite. 
%, that is, $x$ returns to $\Sigma$ in a  finite time.  
(Notice that our definition is slightly different with the standard one, cf.~\cite{Smale1967}.) 
%Note that $x\mapsto T_x$ is a continuous mapping on $\Sigma$, so $x\mapsto  \vert \gamma _x\vert $ and a function $\tilde \varphi :\Sigma \to \R$  given by
%\begin{equation}\label{eq:timeonefunction}
%\tilde \varphi (x) = \int _{\gamma _x} \varphi (y) dy
%\end{equation}
% are also continuous for  any continuous function $\varphi :S\to \R$, where $ \gamma _x=\{ f^t(x) \mid 0\leq t\leq T_x \}$. 
%便利
Let $\gamma_x=\{ f^t(x) \mid 0\leq t\leq T_x \}$ and $\vert \gamma _x\vert $ the length of $\gamma _x$ for $x\in \Sigma$. 
The following elementary lemma will be used repeatedly.

% We will consider a (local) cross-section on which the  Poincar\'e map
%We say that  a closed segment $\Sigma$ of $S$ is a \emph{monotone cross-section} of $F$ if we denote the set of points of   $ \Sigma$ except the endpoints of the segment  $ \Sigma$ by $\mathring\Sigma$, then $F$ is transverse to $\mathring \Sigma$,  the first return time $T_x$ to $\mathring{\Sigma} $
%%\Sigma ^{\mathrm{o}}$
% is well defined and finite for all $x\in \mathring\Sigma $, and the forward Poincar\'e map $P: \mathring{\Sigma}  \to \mathring{\Sigma} $ is strictly increasing with respect to some order $\leq$ over $\mathring{\Sigma} $ (i.e.~$P(x) < P()$).  
\begin{lem}\label{lem:boundedlength}
Let $\Sigma$ be a cross-section of  a $\Ci ^1$ flow $F$  on a compact surface $S$. 
%Assume that $F$ admits  $\Sigma$.
%When $\Sigma$ is a segment, we also assume that 
Assume that  $\tilde x\in \Sigma $  satisfies that $f^{t}(\tilde x)\not\in \partial \Sigma$ for all $0<t\leq T_{\tilde x}$, where $\partial \Sigma$ is the boundary of $\Sigma$.  
Then, 
%for any continuous function $\varphi :S\to \R$,  a function $\tilde \varphi :\Sigma \to \R$  given by
%\begin{equation}\label{eq:timeonefunction}
%\tilde \varphi (x) = \int _{\gamma _x} \varphi (y) dy, \quad \gamma _x=\{ f^t(x) \mid 0\leq t\leq T_x \}
%\end{equation}
% is continuous at $\tilde x$. 
%In particular, 
$x\mapsto \vert \gamma _{x}\vert $ is continuous at $\tilde x$. 
%, where $\vert \gamma _{x}\vert $ is the length of a curve $\gamma_x:=\{ f^t(x) \mid 0\leq t\leq T_x \}$. 
\end{lem}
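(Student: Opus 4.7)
The plan is to reduce continuity of $x \mapsto |\gamma_x|$ at $\tilde x$ to continuity of the first return time $x \mapsto T_x$ at $\tilde x$. Let $A$ denote the continuous vector field generating $F$, so that $A(y) = \partial_t f^t(y)|_{t=0}$ and
\[
|\gamma_x| = \int_0^{T_x} \|A(f^t(x))\|\,dt.
\]
The integrand here is jointly continuous in $(t,x)$ on a compact neighbourhood of $[0, T_{\tilde x}+1] \times \{\tilde x\}$ in $\R_{\geq 0} \times S$, so once I know $T_x \to T_{\tilde x}$ as $x \to \tilde x$ in $\Sigma$, the conclusion follows by a standard uniform-continuity argument.

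To prove continuity of $T_x$ I would build flow-box neighbourhoods at both endpoints of the orbit segment $\gamma_{\tilde x}$. First, near $\tilde x$, transversality of $F$ to $\Sigma$ yields local coordinates in which $\Sigma$ appears as a segment and every orbit starting on $\Sigma$ immediately leaves $\Sigma$ into a fixed side; consequently, for all $x$ in a small neighbourhood of $\tilde x$ in $\Sigma$, the first return cannot occur at a small positive time. Second, near $f^{T_{\tilde x}}(\tilde x)$, the hypothesis $f^{T_{\tilde x}}(\tilde x) \notin \partial \Sigma$ lets me build a flow box in which $\Sigma$ appears as a transversal open segment containing $f^{T_{\tilde x}}(\tilde x)$ in its interior. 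Applying the implicit function theorem to the $C^1$ scalar function given by the transverse coordinate of $f^t(x)$, evaluated at $(T_{\tilde x}, \tilde x)$, then produces a continuous function $\tau(x)$, defined for $x$ near $\tilde x$, with $\tau(\tilde x) = T_{\tilde x}$ and $f^{\tau(x)}(x) \in \Sigma$.

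The remaining step is to exclude a \emph{premature} crossing, i.e.~to show $T_x = \tau(x)$ rather than some smaller time. For this I would invoke compactness: by definition of the first return time, $f^t(\tilde x) \notin \Sigma$ for every $t \in (0, T_{\tilde x})$, so for any small $\delta > 0$ the compact arc $\{f^t(\tilde x) : \delta \le t \le T_{\tilde x} - \delta\}$ lies at a positive distance from $\Sigma$. Continuous dependence of $f^t$ on initial conditions then keeps $f^t(x)$ inside a tubular neighbourhood of this arc that remains disjoint from $\Sigma$, provided $x$ is close enough to $\tilde x$. Combining this with the two flow-box arguments at $t \approx 0$ and $t \approx T_{\tilde x}$ yields $T_x = \tau(x)$, and hence $T_x \to T_{\tilde x}$.

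The main obstacle, and the reason the boundary hypothesis $f^t(\tilde x) \notin \partial \Sigma$ for $0 < t \le T_{\tilde x}$ is imposed, is to ensure that a small perturbation of $\tilde x$ does not cause the orbit to \emph{miss} $\Sigma$ near time $T_{\tilde x}$ and return only much later: without this hypothesis, transversality at the return point would not guarantee a nearby return of nearby orbits, so $T_x$ (and therefore $|\gamma_x|$) could jump discontinuously at $\tilde x$.
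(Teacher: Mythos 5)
Your proof is correct and arrives at the same conclusion by a mildly different route. The paper does not isolate continuity of the return time $x\mapsto T_x$ as a separate step: it partitions the orbit segment into $N+1$ flow-box pieces of time-length $\rho$, compares the lengths of corresponding pieces using uniform continuity of the speed $\ell_A=\vert A(\cdot)\vert$, and absorbs the discrepancy between $T_x$ and $T_{\tilde x}$ into the final piece, whose length is at most $\rho\Vert\ell_A\Vert_\infty$; the fact that nearby orbits return within the window $[N\rho,(N+1)\rho]$ is asserted directly from the hypothesis. You instead prove $T_x\to T_{\tilde x}$ explicitly (implicit function theorem at the transversal return point, plus a compactness argument excluding premature crossings) and then feed this into the arc-length integral $\int_0^{T_x}\Vert A(f^t(x))\Vert\,dt$. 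This is cleaner and in fact more complete than the paper's write-up, which never explicitly rules out a premature return of a nearby orbit (without which its final estimate, which presumes $T_x\in(N\rho,(N+1)\rho]$, would not follow). One small point to tighten in your compactness step: since $\Sigma$ may be an open segment, disjointness of the arc $\{f^t(\tilde x):\delta\le t\le T_{\tilde x}-\delta\}$ from $\Sigma$ does not by itself give positive distance from $\Sigma$; you need the arc to avoid the compact set $\Sigma\cup\partial\Sigma$, which is exactly what the hypothesis $f^t(\tilde x)\notin\partial\Sigma$ for $0<t\le T_{\tilde x}$ supplies --- so that hypothesis is doing work along the whole orbit segment, not only at the return point as your closing paragraph suggests.
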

\begin{proof}
Take  a real number $\epsilon >0$. 
%We simply write $T_{0}$ for $T_{\tilde x}$.
Let $A$ be the vector field generating $F$, and
 $\ell _A(x) = \vert A(x) \vert _x$ for each $x\in S$, where  $\vert v \vert _{x}$ is the length of $v\in T_xM$ with respect to the Riemannian metric  of $S$  at $x$.
Let $\epsilon _1$ be a positive number smaller than 
%such that
%\begin{equation}\label{eq:0615a}
$
 %\epsilon _1 <
%\min \{  \epsilon (2  \Vert \ell _A \Vert _{\infty}T_{\tilde x} ) ^{-1},  
\epsilon (2 T_{\tilde x} ) ^{-1}
$
%\end{equation}
and $\Vert \ell _A\Vert _{\infty}=\sup _{x\in S} \vert \ell _A(x) \vert $.
%(Note that $\inf _{x\in \Sigma} T_{x} > 0$ because $T_{x} >0$ for each point $ x$ of a compact set $\Sigma $.)
For each $\rho \in (0,T_{\tilde x}) $ and each positive numbers $\delta $,
we consider a \emph{flow box} $B_{\rho ,\delta }$ given by $B_{\rho ,\delta} =\{ f^t(x) \mid t\in [0 ,\rho] , x\in U_\delta \}$, where $U_\delta  =\{  x\in \Sigma \mid d_{\Sigma} (\tilde x, x)<\delta \}$.
Let $\rho$  and $\delta $ be sufficiently small positive  numbers such that
$
%\begin{equation}\label{eq:0615b}
\rho < \min \{ T_{\tilde x} ,\epsilon (4\Vert \ell _A \Vert _{\infty} ) ^{-1} \}  ,
$
%\end{equation}
 that $\frac{ T_{\tilde x}}{\rho}$ is not an integer, and that 
 $
 \vert \ell _A(y) -\ell _A(\tilde y) 
 \vert \leq \epsilon _1
$
whenever $y$ and $\tilde y$ are  in $f^{j\rho} (B_{\rho ,\delta })$ with some $0\leq j\leq N:=\lfloor \frac{T_{\tilde x}}{\rho } \rfloor$ (note that $\ell _A$ is uniformly continuous).

Since  $\frac{ T_{\tilde x}}{\rho}$ is not an integer, we have $ N\rho < T_{\tilde x} < (N+1) \rho$.
Furthermore, due to the hypothesis, we can take $\delta >0$ such  that each flow segment of $f^{N\rho} (B_{\rho, \delta})$ intersects  $\Sigma$ (that is, $\{ f^t (x) \mid t\in [N\rho ,(N+1)\rho ] \}$ intersects $\Sigma$ for each $x\in U_ \delta$).

Let $\gamma _{ x, j} := \{ f^t ( x) \mid t\in [j\rho , (j+1) \rho] \}$ for each $x\in \Sigma$ and $0\leq j\leq N$. Then, it is straightforward to see that
\[
\left\vert \vert \gamma _{\tilde x, j} \vert  -\vert \gamma _{x,j} \vert   \right\vert \leq \rho \epsilon _1
\]
for any $ x\in U_\delta$ and $0\leq j\leq N$. 
Consequently, it holds that for each $ x\in U_\delta$,
\begin{align*}
\left\vert \vert \gamma _{\tilde x} \vert  - \vert \gamma _{ x} \vert  
\right\vert 
&\leq N\rho \epsilon _1
+ 
\vert \gamma _{\tilde x, N} \vert +\vert \gamma _{x, N} \vert \\
&\leq T_{\tilde x} \epsilon _1
+ 2\rho \Vert \ell _A \Vert _{\infty} <\epsilon .
\end{align*}
%which is bounded by $\epsilon$ by virtue of \eqref{eq:0615a} and \eqref{eq:0615b}.
Since  $\epsilon$ is arbitrary, this completes the proof.
\end{proof}
%%Note that  the length average of any continuous function $\varphi :S\to \R$ exist on a singular leaf $L=\{ p\}$. 
%%Indeed, since $0$-dimensional volume is positive constant, 
%%we obtain that $B_r^{\mathcal F}(x) = B_s^{\mathcal F}(x)$ for any $s, r> 0$, and so that 
%%. 
%Obviously an analogous statement holds for quasi-cross-section (with $x\in \mathring\Sigma$ instead of $x\in \Sigma$). 
%%with  ``quasi-cross-section'' instead of ``cross section and 
When $\omega (x) $ is a singular point, it is straightforward to see that  the length average of any continuous function along the forward orbit of $x$ exists. 
(Note that the length of each connected component of the intersection of any orbit and a small neighbourhood of any non-degenerate 
%or topologically-pronged
 singularity  is finite; compare with Example \ref{ex2}.)
On the other hand, we need to work a little harder even in the periodic orbit case. 
\begin{prop}\label{IETa}
Let $x$ be a point in $S$ whose $\omega$-limit set is a periodic orbit or an attracting circuit $\gamma$. 
Then the length average of any continuous function $\varphi :S\to \mathbb R$ along the forward orbit of $x$ exists.
\end{prop}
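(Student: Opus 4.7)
The plan is to reduce the length average along the forward orbit of $x$ to a Ces\`aro average over the successive returns of the orbit to a cross-section at a regular point of $\gamma$. Fix a non-singular point $p$ on $\gamma$ and let $\Sigma$ be a closed cross-section of $F$ with $p$ in its interior. Since $\gamma$ attracts a neighbourhood (a one-sided tubular neighbourhood of the periodic orbit, or the collar of attraction of the circuit), the forward orbit of $x$ enters this neighbourhood after some time $t_0$ and crosses $\Sigma$ at increasing times $t_0 < t_1 < \cdots$ with $t_n \to \infty$ and $x_n := f^{t_n}(x) \to p$. Write $\gamma _n$ for the orbit segment from $x_n$ to $x_{n+1}$, and set $L_n := \vert \gamma _n \vert$, $I_n := \int _{\gamma _n} \varphi \, ds$. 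The goal is to prove that $L_n \to L := \vert \gamma \vert$ and $I_n \to I := \int _\gamma \varphi \, ds$.

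In the periodic orbit case, $p$ has finite first return time equal to the period of $\gamma$ and satisfies the $\partial \Sigma$ hypothesis, so Lemma \ref{lem:boundedlength} applied at $\tilde x = p$ gives $L_n \to L$; the same flow-box estimate, with the density $\ell _A$ replaced by $\varphi \cdot \ell _A$, gives $I_n \to I$. In the attracting-circuit case the first return time of $p$ is infinite and Lemma \ref{lem:boundedlength} is not directly applicable---this is the main technical obstacle. My plan is to enumerate the singular points $s_1, \ldots , s_k$ on $\gamma$, fix small neighbourhoods $U_j$ of each $s_j$, and insert auxiliary cross-sections on the arcs of $\gamma$ entering and leaving each $U_j$. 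On the finitely many regular segments of $\gamma$ between consecutive auxiliary cross-sections, the first return time is finite, so Lemma \ref{lem:boundedlength} supplies the required convergence of arc length and integral. Inside each $U_j$, the non-degeneracy of $s_j$ (via a $C^0$ linearization, or directly from the comparison $\vert A(y) \vert \asymp d(y, s_j)$ that follows from the non-vanishing of $DA(s_j)$) shows that the arc length of the piece of the orbit of $x_n$ contained in $U_j$ is uniformly bounded and converges, as $x_n \to p$, to the finite arc length of $\gamma \cap U_j$; the same estimate applied with $\varphi$ in the integrand yields the corresponding convergence of the integral. Summing over pieces gives $L_n \to L$ and $I_n \to I$.

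Finally, for large $r$, let $N(r)$ be the largest integer with $t_0 + \sum _{n=0}^{N(r)-1} L_n \leq r$. Then
\[
\int _{B^+_r(x)} \varphi \, ds = \int _0^{t_0} \varphi (f^s(x))\, \ell _A(f^s(x))\, ds + \sum _{n=0}^{N(r)-1} I_n + R_r,
\]
with $\vert R_r \vert \leq \Vert \varphi \Vert _\infty \cdot L_{N(r)}$, while $\vert B^+_r(x) \vert = r$ differs from $t_0 + \sum _{n=0}^{N(r)-1} L_n$ by at most $L_{N(r)}$. Since $L_n \to L > 0$ the sequence $(L_n)$ is bounded, so both remainders are bounded. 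Because $L > 0$ forces $N(r) \to \infty$ as $r \to \infty$, Ces\`aro applied to the convergent sequences $(L_n)$ and $(I_n)$ yields $\frac{1}{\vert B^+_r(x) \vert} \int _{B^+_r(x)} \varphi \, ds \to I/L$, which is the desired length average along the forward orbit of $x$. The main obstacle throughout is the attracting-circuit step, where one must crucially exploit the non-degeneracy hypothesis to bound and take limits of the arc-length contribution near each saddle; the rest of the argument is a straightforward Ces\`aro interpolation.
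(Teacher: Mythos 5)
Your proposal follows essentially the same route as the paper's proof: both reduce the length average along the forward orbit to averages of the per-return arc lengths and arc integrals over a transversal at $\gamma$, show these converge to $\vert\gamma\vert$ and $\int_\gamma\varphi$ respectively (the paper via Lemma \ref{lem:boundedlength} and the Poincar\'e-map identity \eqref{eq:contraction} on a one-sided section $\Sigma_+(x_{n_3},s_0)$, you via flow boxes on the regular arcs plus the estimate $\vert A(y)\vert\asymp d(y,s_j)$ near the non-degenerate saddles), and conclude with the same Ces\`aro interpolation in $r$. If anything, your handling of the attracting-circuit case is more explicit than the paper's, which simply asserts that $\lim_{\tilde x\to s_0}\vert\gamma_{\tilde x}\vert$ is finite and positive.
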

\begin{proof}
We assume that $x\not \in \gamma$ because we can immediately get the conclusion when $x\in \gamma$. 
% (note that $x\in \gamma$ only happens in the case when $\gamma$ is a periodic orbit). 
Let $\Sigma_+$ be a closed segment which is transverse to $\gamma$ at $s_0$ in the boundary of $\Sigma_+$. Let $t_n$ be the $n$-th hitting time of $x$ to $\Sigma _+$ with $n\geq 1$ (i.e.~$0\leq t_n<t_{n+1}$ and  $f^{t}(x) \in \Sigma _+$ if and only if $t\in \{ t_1,t_2,\ldots \}$), and 
$x_n= f^{t_n}(x)$. 
Then, by the assumption $\gamma =\omega (x)$ and the continuity of the vector field  generating $F$, one can find $n_0\geq 1$ such that 
\begin{equation}\label{eq:0129a}
d_{\Sigma _+} (x_{n+1},s_0) <d_{\Sigma _+} (x_n, s_0) \quad  \text{for each $n\geq n_0$.}
\end{equation} 

Let $\Sigma _+(a,b)\subset \Sigma _+$ be the open segment connecting $a$ and $b$. 
%If there are infinitely many periodic orbits intersecting to $\Sigma _+$, then 
One can find $n_1 \geq n_0$ such that the $\omega$-limit set of each point in  $\Sigma _+(x_{n_1}, s_0)$ does not include singular points, 
 because otherwise one can find infinitely many singular points, which leads to  contradiction due to the non-existence of degenerate singularities (see comments above Definition \ref{def:HBF}). 
 Similarly, one can find $n_2 \geq n_1$ such that   the $\omega$-limit set of each point in  $\Sigma _+(x_{n_2}, s_0)$ includes no periodic orbits, 
 because otherwise one can find infinitely many periodic orbits arbitrary close to $\gamma$, which contradicts to the continuity of the vector field  generating $F$ and $\omega(x) = \gamma$. 
Moreover, by the attracting property of $\gamma$, 
%by the finite existence of quasi-minimal sets \cites{Mayer1943,Markley1970} and the fact that the closure of any quasi-minimal set does not intersect with attracting circuits or periodic orbits (cf.~\cite{yokoyama2016topological}*{Lemma 2.1}),  
one can find $n_3 \geq n_2$ such that the $\omega$-limit set of each point in $\Sigma_+(x_{n_3}, s_0)$ is contained in a collar of attraction of $\gamma$. 
Since no quasi-minimal sets are contained in annuli, by virtue of Theorem \ref{Poincare-Bendixson}, 
the $\omega$-limit set of each point in $\Sigma_+(x_{n_3}, s_0)$ is $\gamma$ and so   
%Furthermore, 
%if we let $\Sigma _+(a,b)\subset \Sigma _+$ be the open segment connecting $a$ and $b$, then
%by the continuity of the vector field generating $F$ (around $\gamma$), 
%one can find $n_3 \geq n_2$ such that
 the first return time $T_{\tilde x}$ of each point $\tilde x$ in $\Sigma := \Sigma _+ (x_{n_3}, s_0)$ to $\Sigma $ 
%(i.e.~the positive number $t$ such that $f^t(\tilde x)\in \Sigma$ and $f^s(\tilde x) \not \in \Sigma$ for all $0<s<t$)
 is well defined and finite. 
 %($\tilde x$ returns to $\Sigma$ in a finite time): 
%Otherwise, since each  attracting circuit contains singular points, one can find infinitely many singular points which are arbitrary close to $\gamma$ by Theorem \ref{Poincare-Bendixson}, and it leads to a contradiction due to the non-existence of degenerate singularities (see comments above Definition \ref{def:HBF}). 
% and the continuity of the vector field generating $F$. 
%Therefore, i
Furthermore, it follows from \eqref{eq:0129a} that  
\begin{equation}\label{eq:0129b}
d_{\Sigma _+} (P(\tilde x),s_0) <d_{\Sigma _+} (\tilde x, s_0) \quad  \text{for each $\tilde x\in \Sigma$},
\end{equation}
 where $P: \Sigma \to \Sigma$ is the forward Poincar\'e map on $\Sigma$. 
%\marginpar{念のためquasi-minimal caseもチェック}

%
%
%By assumption, it is straightforward to check that there is a closed segment $\Sigma$ and $s_0 \in \partial \Sigma$ such that $F$ is transverse to $\mathring \Sigma := \Sigma -\{ s_0\}$, the first return time $T_y$ of $y\in \mathring \Sigma$ to $\mathring \Sigma$ (i.e.~the positive number $t$ such that $f^t(y)\in \mathring \Sigma$ and $f^s(y) \not \in \mathring\Sigma$ for all $0<s<t$) is well defined and finite, 
% $\gamma$ is transverse to $\Sigma$ at  $s_0$, and 
% $d_{\Sigma} (P (y) ,s_0) <d_{\Sigma} (y,s_0)$ 
% %converges to $s_0$ (with respect to the metric of $S$) as $j\to \infty$
%  for every $y\in \mathring \Sigma$,
%where $P: \mathring \Sigma \to \mathring\Sigma$ is the forward Poincar\'{e} map of $\mathring \Sigma$. 
%\marginpar{同じ問題として、ゆっくりとcircuitに収束する場合はtime average}
%Denote the fixed point  by $s_0$. 
%Let $\Sigma^+ \subset \Sigma$ be the side including $s_0$ on which $P$ is contracting.  
%Note that  $\tilde x\mapsto T_{\tilde x}$ is continuous on $\Sigma$, by taking $n_1$ large if necessary. 
By Lemma \ref{lem:boundedlength},
%By the continuity of $\tilde x\mapsto T_{\tilde x}$, it is obvious that 
%$y\mapsto T_y$ is a continuous mapping on $\mathring \Sigma$, so
 $\tilde x\mapsto  \vert \gamma _{\tilde x}\vert $ is continuous on the cross-section $\Sigma$, and thus,  a function $\tilde \varphi :\Sigma  \to \R$  given by
\begin{equation}\label{eq:timeonefunction}
\tilde \varphi (\tilde x) = \int _{\gamma _{\tilde x}} \varphi (y) dy \quad (\tilde x\in \Sigma)
\end{equation}
 is also  continuous for  any continuous function $\varphi :S\to \R$, where $ \gamma _{\tilde x}=\{ f^t(\tilde x) \mid 0\leq t\leq T_{\tilde x} \}$. 
% \marginpar{流石に証明をつけた方がいいか}
By \eqref{eq:0129b}, for any continuous function $\tilde \varphi _1:\Sigma _+ \to \R$ and $\tilde x\in\Sigma$, 
 %from the side, 
we have
%the time average of $\tilde \varphi$ at $\tilde x$
\begin{equation}\label{eq:contraction}
\lim _{N\to \infty} \frac{1}{N} \sum _{j=0}^{N-1} \tilde \varphi _1(P^j (\tilde x)) 
= \lim _{\tilde y\to s_0} \tilde \varphi _1(\tilde y),
\end{equation}
in particular, the time averages of $\tilde \varphi _1$ at $\tilde x$ 
exists. 
% at every  $\tilde x\in \Sigma$. 
%, length averages exist. 
% does not have historic behaviour.
On the other hand,
%By the assumption $\gamma =\omega (x)$, one can find a positive number $t_0$ such that $x_0:= f^{t_0}(x)$ is in $\mathring \Sigma$.  So, 
for all large $r$, denoting by $N_r$ the maximal integer such that $P^{j}(x_{n_3}) \in B^+_r(x)$ for each $0\leq j\leq N_r-1$, 
 we can rewrite the integral  of $\varphi$ along the forward orbit of $x$ by
 \begin{equation}\label{eq:rew}
  \int _{B^+_r(x)} \varphi (y)dy = \int _{\tilde \gamma _0} \varphi (y) dy +\sum _{j=0}^{N_r-1} \tilde \varphi (P^j(x_{n_3})) +\int _{\tilde \gamma _1} \varphi (y) dy, 
\end{equation}
%with $\tilde \varphi$ given in \eqref{eq:timeonefunction}, 
where  $\tilde \gamma _0 =\{ f^t(x) \mid 0\leq t\leq t_{n_3}\}$ and  $\tilde \gamma _1 =B_r^+(x) - \tilde \gamma _0 - \bigcup _{j=0}^{N_r-1} \gamma _{P^j(x_{n_3})}$, whose lengths are bounded by a constant  independently of $r$.

%\marginpar{$\tilde \varphi$が$\Sigma$に拡張可能であることが重要なので、Lemma 7はquasi-cross-sectionで証明して、cross-sectionをanalogicalと言うか}
Note that $\lim _{\tilde x\to s_0} \vert \gamma _{\tilde x}\vert \in (0,\infty )$ because the vector field generating $F$  is continuous  and $\vert \gamma \vert \in (0, \infty)$, and so we have $\lim _{\tilde x\to s_0} \vert \tilde \varphi ( \tilde x) \vert <\infty$ due to the form of $\tilde \varphi$ in \eqref{eq:timeonefunction}.
Applying  \eqref{eq:rew} to $\varphi \equiv 1$ and  \eqref{eq:contraction} to $\tilde \varphi _1(\tilde x) = \vert \gamma_{\tilde x} \vert$, we get that
%the continuity of $x \mapsto \gamma _{x}$ on the side on which $P$ is contracting implies that 
\begin{equation}\label{eq:boundedlength}
\lim _{r\to \infty} \frac{  \vert B^+_r(x)\vert}{N_r} = \lim _{N\to \infty}\frac{1}{N} \sum _{j=0}^{N-1}\vert \gamma _{P^j(x_{n_3})}\vert =\lim _{\tilde x\to s_0}\vert \gamma _{\tilde x} \vert .
\end{equation}
%which coincides with $> 0$, for all $y\in \mathring \Sigma$. 
Hence, it follows from   \eqref{eq:rew} that 
\[
 \lim _{r\to \infty} \frac{1}{\vert B^+_r(x)\vert} \int _{B^+_r(x)} \varphi (y)dy = \left(\lim _{\tilde x\to s_0}\vert \gamma_{\tilde x}\vert \right)^{-1} \lim _{N\to \infty} \frac{1}{N} \sum _{j=0}^{N-1} \tilde \varphi (P^j(x_{n_3})),
\]
if the limits exist. 
%\marginpar{$\vert \gamma _x\vert >0$の保障を忘れずに}
By using   \eqref{eq:contraction} again, we can see that the limit in the right-hand side exists and coincides with $\lim _{\tilde x\to s_0} \tilde \varphi (\tilde x) /\vert \gamma _{\tilde x}\vert $. 
%, which is exactly $\tilde \varphi (s_0) /\gamma _{s_0}$ when $\gamma$). 
%(It is actually $\tilde \varphi (s_0) /\vert \gamma _{s_0}\vert $.) 
This completes the proof.
\end{proof}

\begin{remark}\label{rmk:Bowen}
The Bowen flow has an open  set $D$ surrounded by an attracting circuit consisting of two saddle singularities $p_1$ and $p_2$ and two heteroclinic orbits $\gamma _1$ and $\gamma _2 $ connecting theses singularities  such that the time average of some continuous function  does not exist at $x$ and $\omega (x) $ is the attracting circuit $\gamma := \{p_1\} \sqcup \{p_2\} \sqcup \gamma _1 \sqcup \gamma _2$ for every point $x$ in $D$ except the source $ \hat{p}$ of the flow 
  %and their $\omega $-limit set is $\gamma$
   (\cite{Takens1994}), see Figure \ref{fig0}. 
\begin{figure}[h]\begin{center}
\includegraphics[scale=0.45]{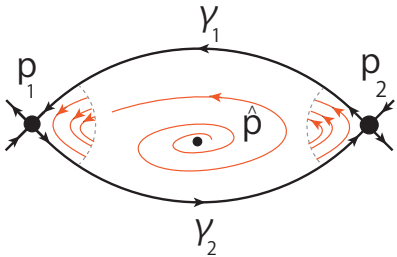}
\end{center} 
\caption{Bowen's flow}\label{fig0}
\end{figure} 
On the other hand, length averages exist at every point in $D$ for the singular foliation generated by the Bowen's flow due to Proposition \ref{IETa}.
The key  is that the \emph{length} of $\tilde \gamma _1$ in the proof of Proposition \ref{IETa}  is bounded by a constant independently of  $r$, while the \emph{time}  to pass $\tilde \gamma _1$, that is $t-T_{n_3} - \sum _{j=0}^{N_r-1} T_{P^j(x_{n_3})}$, becomes larger as $r$ (or $t$) increases since $f^t(x)$ is  ``trapped'' in a neighbourhood of $p_1$ and $p_2$ longer as the ``depth'' $N_r$ increases.
\end{remark}

Finally, we will  consider the existence of length averages in the case when the $\omega$-limit
set is a quasi-minimal set. 
%%on the basin of the union of quasi-minimal sets (i.e.~the set of points whose $\omega $-limit set is a quasi-minimal set).  
%%For a forward invariant set $A$ for $F$, we mean by  the basin of $A$ the set of points whose $\omega$-limit set is 
%%  in the case when the  $\omega$-limit set is  a quasi-minimal set.
%%The following lemma is an easy consequence of Gutierrez' structure  theorem.
%%Let $F$ be a $\Ci ^1$ flow for which the union of quasi-minimal sets is not empty. 
%%Then, 
%As 
%%mentioned
% in the proof of Proposition \ref{IET2}, 
 It follows from the structure  theorem of Gutierrez \cite{Gutierrez1986} (see also Lemma 3.9 of \cite{Gutierrez1986} and Theorem 2.5.1 of \cite{Nikolaev-Zhuzhoma}) 
 %for closures of non-closed recurrence orbits
  that for any $\Ci ^1$ flow $F$, 
there are at most  finitely many quasi-minimal sets, and that for each quasi-minimal set $\overline{O}$ 
%$\overline{O}$ of 
(i.e.~the closure of a non-closed and recurrent orbit $O$), one can find  a circle $\Sigma$ which is transverse to $O$ such that the forward Poincar\'e 
map $P : \Sigma \to  \Sigma$ is well-defined and topologically semi-conjugate to a minimal
interval exchange transformation $E: \mathbb S^1\to \mathbb S^1$ on the circle $\mathbb S^1$
(recall that an interval exchange transformation is said to be \emph{minimal} if every orbit is dense; 
refer to \cite{Viana2006} for definition and basic properties  of interval exchange transformations). 
Moreover, the semi-conjugacy $h: \Sigma \to \mathbb S^1$ is constructed as the quotient map induced by pairwise disjoint (possibly empty) closed intervals $\{ I_n\}_{n\geq 1}$ of $\Sigma$, that is, $h(I_n)$ is a point set for each $n\geq 1$, the restriction of $h$ on $\Sigma - \cup _{n\geq 1} I_n$ is injective and $h\circ P = E\circ h$. 
%(Such a map is called the \emph{blow-up} of $E$.)
We call  $E$  the  associated interval exchange transformation of the quasi-minimal set $\overline{O}$. 
%We say that  $P$ is the Poincar\'e map associated with the quasi-minimal set $\overline{O}$. 
%associated blow-up of an  
\begin{dfn}\label{dfn:ue}
A quasi-minimal set of a  $\Ci ^1$ flow $F$ is said to be \emph{uniquely ergodic} if the  associated interval exchange transformation  is   uniquely ergodic  (i.e.~there is a unique invariant probability measure of the transformation). 
Furthermore, a quasi-minimal set of a  $\Ci ^1$ flow $F$ is said to be \emph{locally dense} 
%(resp.~\emph{exceptional})
 if $h$ is 
%(resp.~is not)
 injective. 
%When some quasi-minimal set of $F$ is exceptional, we simply say that the union of quasi-minimal set of $F$ is 
%%\emph{
%exceptional.  
%% the  associated interval exchange transformation  is   uniquely ergodic. 
When every quasi-minimal set of $F$ is uniquely ergodic or locally dense, we simply say that the union of quasi-minimal set of $F$ is 
%\emph{
uniquely ergodic or locally dense, respectively.  
\end{dfn}

\begin{remark}\label{rmk:r121}
There is an example of a minimal but non-uniquely ergodic  interval exchange transformation 
%by Keynes and Newton 
(\cite{Keynes-Newton1976}).
%, so  Definition  \ref{dfn:ue} is not trivial. 
On the other hand, it is rather typical for interval exchange transformations to be uniquely ergodic: 
Recall that an interval exchange transformation is determined by the number of intervals $N\geq 2$, a   length vector $(\lambda _1, \lambda _2, \ldots ,\lambda _N) \in \mathbb R^N$ and a permutation  of $\{ 1,2, \ldots , N\}$. 
%
%Furthermore, i
It is a famous deep result 
%by a seminal work 
 (\cites{Masur1982, Veech1982}) that for each $N$, $\pi$ and Lebesgue almost every $\lambda =(\lambda _1, \lambda _2, \ldots ,\lambda _N) $,  the interval exchange transformation given  by $N$, $\lambda $ and $\pi$ is uniquely ergodic. 
% [uni ergが起きる例] + [non-uniquely ergodicであってもtime averageがfullで存在する可能性があるので...問題]

Furthermore, it is known that any interval exchange transformation with $N\leq 3$ is uniquely ergodic (\cite{Keane1975}). We show that $N\leq 2$ always holds for the associated interval exchange transformations of a $\Ci^1$ flow $F$ on a compact surface $S$ if  the orientable genus of $S$ is less than two or the non-orientable genus of $S$ is less than four.  It is known that the total number of quasi-minimal sets of $\mathcal F$ cannot exceed $g$ if $S$ is an orientable surface of genus $g$ \cite{Mayer1943},
and $\frac{h-1}{2}$
if $S$ is a non-orientable surface of genus $h$ \cite{Markley1970}.
So,  $S$ has no quasi-minimal set when the  orientable genus of $S$ is zero or the non-orientable genus  of $S$ is less than three. 
Moreover, since each boundary component of a foliated surface  with no degenerate singularities is either transverse to the foliation or a union of leaves, 
we can assume that $S$  is a closed surface. 
Therefore, it is a direct consequence of Denjoy-Siegel Theorem that $N\leq 2$ when the orientable genus of $S$ is one. 
% an orientable genus. 

We consider the number of intervals $N$ for a closed surface $S$ with non-orientable genus  three. 
By classification theorem of closed surfaces, the Euler characteristic of $S$ is $2 - 3 = -1$. 
So, by cutting $S$ along a circle $\Sigma$ which is transverse to a quasi-minimal set of $F$, we obtain a projective plane with two punctures
(note that its  Euler characteristic  is also $1 - 2 = -1$).
%Let $\Sigma$ be a circle which is transverse to a quasi-minimal set of $F$. 
The resulting surface is drawn on the left-side of 
 Figure \ref{fig:proje}, and 
 two small circles (with $+$ and $-$) on the left-hand side correspond to $\Sigma$. 
 Now we consider two (parts of) trajectories  $a$ and $b$ which start from the circle with $+$ and land on the circle with $-$ (i.e.~transversely return to $\Sigma$). 
 When $N\geq 2$, we can take $a$ and $b$ as they are not parallel as depicted in  Figure \ref{fig:proje}. 
 Here by parallel we mean that there is a trivial flow
box whose transverse boundaries are $a$ and $b$ and whose tangential
boundaries are the circles with $+$ and $-$.
 However, another trajectory $c$ 
 %starting from the circe with $+$ 
 has to be parallel to $a$ or $b$ 
 %before landing on the circle with $-$
  (see the right-hand side of Figure \ref{fig:proje}), and thus, we immediately get $N\leq 2$. 

\begin{figure}[h]
\begin{center}
\includegraphics[scale=0.25]{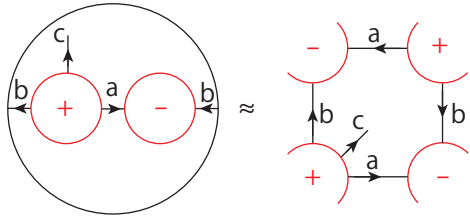}
\end{center} 
\caption{Trajectories on a projective plane with two punctures}
\label{fig:proje}
\end{figure}
%
%a compact surface without orientable genus
%not a torus, there is no quasi-minimal set of $\mathcal F$. 
%When the orientable genus is less than one or 
%Since the case when the orientable genus is less than two or the case when the non-orientable genus
% because the surface has at most one orientable genus and the associated interval exchange transformations can be represented by two intervals. 
%\marginpar{このgenusの数とintervalの数に関する主張の、簡単な証明 or referenceを載せていただけますでしょうか}
\end{remark}

\begin{prop}\label{IET2}
There is a Lebesgue zero measure set $A$ such that if $x$ is outside of $A$ and the $\omega$-limit set of $x$ is a locally dense quasi-minimal set, then  the length average of any continuous function 
%$\varphi :S\to \R$
  along the forward orbit of $x$ exists. 
  %  when $x$ is in the basin of the union of  quasi-minimal sets of $F$ and outside of $A$. 
\end{prop}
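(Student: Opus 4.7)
The plan is to adapt the cross-section argument of Proposition \ref{IETa} to the locally dense case, replacing the contraction-to-a-point step \eqref{eq:contraction} by the Birkhoff ergodic theorem for the associated interval exchange transformation.

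First, I would reuse the same cross-section machinery. By Gutierrez's theorem quoted above, fix a cross-section circle $\Sigma$ transverse to the locally dense quasi-minimal set $\overline O$, with first-return map $P:\Sigma\to\Sigma$ and Gutierrez semi-conjugacy $h:\Sigma\to\mathbb S^1$ onto a minimal IET $E$. Local density precisely means that $h$ is a homeomorphism, so $P$ is topologically conjugate to $E$. For a point $x\in S$ with $\omega(x)=\overline O$ (discarding the Lebesgue-null set of orbits eventually hitting a singularity, which form a finite union of one-dimensional separatrices), the forward orbit of $x$ hits $\Sigma$ infinitely often at points $x_n=P^{n-1}(x_1)$, and exactly as in \eqref{eq:rew} one obtains
\begin{equation*}
 \int_{B^+_r(x)}\varphi\,dy = \tilde\gamma_{0,r}+\sum_{j=0}^{N_r-1}\tilde\varphi(P^j x_1),\qquad
 |B^+_r(x)| = \tilde\gamma_{0,r}'+\sum_{j=0}^{N_r-1}|\gamma_{P^j x_1}|,
\end{equation*}
where $\tilde\varphi(\tilde x):=\int_{\gamma_{\tilde x}}\varphi\,dy$ and $|\gamma_{\tilde x}|$ are continuous on $\Sigma$ (away from finitely many points) by Lemma \ref{lem:boundedlength}, and the residues $\tilde\gamma_{0,r},\tilde\gamma_{0,r}'$ are uniformly bounded in $r$. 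Existence of the length average at $x$ therefore reduces to existence of the Birkhoff averages $N^{-1}\sum_{j=0}^{N-1}\tilde\varphi(P^j x_1)$ and $N^{-1}\sum_{j=0}^{N-1}|\gamma_{P^j x_1}|$.

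Next I would exploit the Lebesgue invariance of the IET. Since $E$ is a piecewise isometry of $\mathbb S^1$, Lebesgue measure $\mathrm{Leb}$ on $\mathbb S^1$ is $E$-invariant, and Birkhoff's ergodic theorem applied to a countable dense subset of $C(\mathbb S^1)$ produces a $\mathrm{Leb}$-full Borel set $G\subset\mathbb S^1$ on which the Birkhoff average of every continuous function converges. Setting $G_\Sigma:=h^{-1}(G)$, the topological conjugacy yields that at every $\tilde x\in G_\Sigma$ the Birkhoff averages for $P$ of every continuous test function on $\Sigma$ converge. I would then define the candidate exceptional set $A\subset S$ to be the union of the (Lebesgue-null) set of orbits meeting a singularity and the flow-saturation of $\Sigma\setminus G_\Sigma$. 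For $x\notin A$ with $\omega(x)=\overline O$, the first return $x_1$ lies in $G_\Sigma$, so the length average of every continuous $\varphi$ at $x$ exists. By Fubini in flow-box coordinates around $\Sigma$, where the map $(\tilde x,t)\mapsto f^t(\tilde x)$ is a $C^1$ diffeomorphism with bounded positive Jacobian, proving $\mathrm{Leb}_S(A)=0$ reduces to proving $\mathrm{Leb}_\Sigma(\Sigma\setminus G_\Sigma)=0$.

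The hard part is precisely this final transfer: a Lebesgue-null subset of $\mathbb S^1$ need not have Lebesgue-null preimage under an arbitrary homeomorphism of the circle, so one needs $h^{-1}$ to satisfy the Luzin~N property (equivalently, to be absolutely continuous), which is not automatic for topological conjugacies of $C^1$ circle maps -- compare Herman's singular conjugacies of $C^\infty$ circle diffeomorphisms of Liouville rotation number. My expected resolution is to exploit the specific geometric origin of $h$ as the Gutierrez semi-conjugacy for a $C^1$ flow on a compact surface: rather than pulling $\mathrm{Leb}_{\mathbb S^1}$ back through $h^{-1}$, one works directly with the invariant measure $\nu:=(h^{-1})_*\mathrm{Leb}_{\mathbb S^1}$ on $\Sigma$, extends it via the flow-box construction to a flow-invariant measure $\mu$ on $S$, deduces $\mu$-a.e.\ existence of length averages from Birkhoff applied to $\mu$, and finally compares $\mu$ with $\mathrm{Leb}_S$ using the $C^1$ regularity of the generating vector field together with the observation that in the locally dense case $\overline O$ itself has nonempty interior in $S$ (so both measures are concentrated on the same open region). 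This measure-comparison step, rather than the dynamics, is where the delicate part of the argument is expected to lie.
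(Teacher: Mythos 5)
Your reduction is the same as the paper's: fix the Gutierrez cross-section $\Sigma$ with return map $P$, use local density to make $h$ a conjugacy onto the minimal IET $E$, apply Birkhoff to the Lebesgue-invariant $E$ to obtain a null exceptional set, pull it back to $\Sigma$, take the backward flow-saturation (a countable union of $\Ci^1$ images of null sets, hence null in $S$), and finish by a finite union over the finitely many quasi-minimal sets. The paper's proof is exactly this, and for the passage from Birkhoff averages of $\tilde\varphi$ and $\vert\gamma_{\tilde x}\vert$ to the length average it just points back to Lemma \ref{lem:boundedlength} and the decomposition \eqref{eq:rew} in the proof of Proposition \ref{IETa}, as you do.

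The one point where you diverge is the transfer of the null set from $\mathbb{S}^1$ back to $\Sigma$. You are right that this requires $h^{-1}$ to satisfy the Luzin N property (equivalently $h_*\mathrm{Leb}_\Sigma\ll\mathrm{Leb}_{\mathbb{S}^1}$), which is not automatic for a topological conjugacy; the paper does not engage with this and simply asserts that the exceptional set $A_0\subset\Sigma$ has Lebesgue measure zero, in effect identifying Lebesgue measure on $\Sigma$ with the $P$-invariant measure $h^*\mathrm{Leb}_{\mathbb{S}^1}$. However, your proposed workaround does not close the gap you open: replacing $\mathrm{Leb}_\Sigma$ by the flow-invariant measure $\mu$ built from $(h^{-1})_*\mathrm{Leb}_{\mathbb{S}^1}$ only yields $\mu$-a.e.\ existence, and the observation that $\mu$ and $\mathrm{Leb}_S$ are both carried by the same open region gives no control on $\mathrm{Leb}_S$ relative to $\mu$ --- two fully supported measures on an open set can be mutually singular, which is precisely the Herman-type phenomenon you cite. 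So to match the paper you should simply state the absolute-continuity step as the paper does; to genuinely improve on it you would need an actual argument for that step, which your sketch does not yet contain.
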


\begin{proof}
%%[Proof of Proposition \ref{IET2}]
%Let $\overline{O}$ be a quasi-minimal set, that is, the closure of  
% %By assumption, there is 
% a  non-closed and recurrent orbit $O$. 
% % such that $\overline{O} =\omega (x)$. 
% \marginpar{正確には$C^2$だと``Denjoy case''を除外できることはGutierrez1986には書いていないのでギャップがあります}
%By applying the structure  theorem in Gutierrez \cite{Gutierrez1986} (see also Lemma 3.9 of \cite{Gutierrez1986} and Theorem 2.5.1 of \cite{Nikolaev-Zhuzhoma}) for closures of non-closed recurrence orbits, we can find a  circle $\Sigma  $ which is transverse to $\overline{O} $ such  that the forward Poincar\'{e} map $P:\Sigma \to \Sigma$ is  well-defined and topologically conjugate to an interval exchange transformation $E: \mathbb S^1\to \mathbb S^1$ on the circle $\mathbb S^1$. 
%(Refer to \cite{Viana2006} for definition and basic properties  of interval exchange transformations.) 
Let $P$ be the Poincar\'e map on   a circle  $\Sigma$ which transversely intersects a quasi-minimal set $\overline{O}$. 
Since any interval exchange transformation preserves a Lebesgue measure, by the hypothesis,  there is a zero Lebesgue  measure set  $A_0 \subset \Sigma$ such that the time average of $P$ for any continuous function on $\Sigma$ exists at every $x$ outside of $ A_0$. 
Let $O_-( A_0) =  \bigcup _{t\leq 0} f^t (A_0) $, then $O_-( A_0)$ be a Lebesgue zero measure set of $S$
 (note that $ O_-( A_0)$ can be decomposed into countably many  zero measure sets $ \bigcup _{-n -1 < t\leq -n } f^t (A_0)$ with $n\geq 0$). 
Let $y$ be a point outside of $O_-( A_0)$ such that the $\omega $-limit set  of $y$ is $\overline{O}$. 
Then,  one can find a real number $t_1\geq 0$ such that $f^{t_1}(y)\in \Sigma$ because $\omega (y) =\overline{O}$, and 
 it follows from $y\not \in O_-( A_0)$ that  $f^{t_1}(y) \not\in A_0$. 
 Therefore,  the length average of any continuous function exists along the forward orbit of $y$ 
%Indeed, we get the claim 
by Lemma \ref{lem:boundedlength} (see the proof of Proposition \ref{IETa}). 
Since there are at most finitely many quasi-minimal sets (cf.~\cites{Gutierrez1986, Nikolaev-Zhuzhoma}), this completes the proof. 
\end{proof}

\begin{prop}\label{IET2r2}
Let   $x$ be a point whose $\omega $-limit set is a uniquely ergodic quasi-minimal set. 
%  for which any quasi-minimal sets  is uniquely ergodic. 
%Let $x$ be a point in $S$ whose   $\omega$-limit set  is a quasi-minimal set. 
%Then
Then,  the length average of any continuous function 
%$\varphi :S\to \R$
  along the forward orbit of $x$ exists. 
  %  when $x$ is in the basin of the union of  quasi-minimal sets. 
\end{prop}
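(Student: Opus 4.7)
The plan is to mimic the strategy of Proposition~\ref{IETa}, reducing the length average along the forward orbit of $x$ to a Birkhoff-sum computation for the forward Poincar\'e map $P:\Sigma\to\Sigma$ on the circle cross-section $\Sigma$ supplied by Gutierrez's theorem. Since $\omega(x)=\overline{O}$ meets $\Sigma$ transversely, the forward orbit of $x$ crosses $\Sigma$ infinitely often; I would take $x_0=f^{t_1}(x)\in\Sigma$ to be the first such crossing and set $x_j=P^j(x_0)$. Because $\Sigma$ is a circle, $\partial\Sigma=\emptyset$, so Lemma~\ref{lem:boundedlength} makes $\tilde x\mapsto|\gamma_{\tilde x}|$ and $\tilde\varphi(\tilde x):=\int_{\gamma_{\tilde x}}\varphi(y)\,dy$ continuous on the compact set $\Sigma$. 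Copying the rewriting from the proof of Proposition~\ref{IETa} (cf.~\eqref{eq:rew}), I would obtain
\[
\int_{B_r^+(x)}\varphi(y)\,dy=\sum_{j=0}^{N_r-1}\tilde\varphi(x_j)+O(1),\qquad |B_r^+(x)|=\sum_{j=0}^{N_r-1}|\gamma_{x_j}|+O(1),
\]
reducing the statement to convergence of $\frac{1}{N}\sum_{j=0}^{N-1}g(x_j)$ for each continuous $g:\Sigma\to\R$, to be applied with $g=\tilde\varphi$ and $g=|\gamma_{\cdot}|$.

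To establish this convergence, I would push the problem down to the circle via the semi-conjugacy $h\circ P=E\circ h$. Let $\mu$ denote the unique $E$-invariant Borel probability on $\mathbb{S}^1$; since $E$ is minimal, $\mu$ is non-atomic, and thus the countable set $B:=\{h(I_n)\mid n\geq 1\}$ has $\mu(B)=0$. For a continuous $g:\Sigma\to\R$, define $\hat g:\mathbb{S}^1\to\R$ by $\hat g(y):=g((h|_K)^{-1}(y))$ when $y\notin B$ (with $K:=\Sigma-\bigcup_n I_n$ so that $h|_K$ is injective) and by an arbitrary selection $\hat g(h(I_n)):=g(z_n)$ for some $z_n\in I_n$ otherwise. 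Then $\hat g$ is bounded and continuous off the $\mu$-null countable set $B$, so unique ergodicity of $E$ yields
\[
\lim_{N\to\infty}\frac{1}{N}\sum_{j=0}^{N-1}\hat g(E^j(y_0))=\int\hat g\,d\mu
\]
for every $y_0\in\mathbb{S}^1$, in particular at $y_0:=h(x_0)$ (this is the standard extension of Oxtoby's theorem to bounded observables with $\mu$-null discontinuity set).

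The remaining step, which I expect to be the main obstacle, is the comparison of $g(x_j)$ with $\hat g(E^j(y_0))=\hat g(h(x_j))$. By construction these agree when $h(x_j)\notin B$. When $h(x_j)\in B$, one has $x_j\in I_{n_j}$ for some index $n_j$, and the defect is bounded by the oscillation of $g$ over $I_{n_j}$. Minimality of $E$ forbids periodic orbits, so the indices $n_j$, taken over the times $j$ with $h(x_j)\in B$, are pairwise distinct; combined with $\sum_n|I_n|\leq|\Sigma|<\infty$, this forces $|I_{n_j}|\to 0$ along that subsequence. Uniform continuity of $g$ on $\Sigma$ then gives $|g(x_j)-\hat g(E^j(y_0))|\to 0$ as $j\to\infty$, and Ces\`aro averaging absorbs the defect, yielding $\frac{1}{N}\sum_{j=0}^{N-1}g(x_j)\to\int\hat g\,d\mu$. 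Applying this to $g=\tilde\varphi$ and to $g=|\gamma_{\cdot}|$ (whose limit is strictly positive, since $|\gamma_{\cdot}|$ is continuous and bounded below by a positive constant on the compact circle $\Sigma$) gives the length average of $\varphi$ along the forward orbit of $x$. The delicate point is that $h$ typically collapses entire intervals, so a generic continuous observable on $\Sigma$ does not descend to a continuous function on $\mathbb{S}^1$; one must exploit the summability of the gap lengths together with the wandering of the $I_n$ under $P$ to absorb the defect at \emph{every} point, not merely at a $\mu$-typical one.
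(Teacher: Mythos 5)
Your proposal is correct, and its overall skeleton (pass to the circle cross-section $\Sigma$ from Gutierrez's theorem, use Lemma \ref{lem:boundedlength} to make $\tilde x\mapsto|\gamma_{\tilde x}|$ and $\tilde\varphi$ continuous, and reduce the length average to Birkhoff sums of $\tilde\varphi$ and $|\gamma_\cdot|$ under $P$ via the decomposition \eqref{eq:rew}) is exactly the paper's. Where you genuinely diverge is in handling the central difficulty, the possible non-injectivity of the semi-conjugacy $h$. The paper's argument is measure-theoretic and shorter: any $P$-invariant probability $\mu$ pushes forward to the unique $E$-invariant measure (Lebesgue on $\mathbb{S}^1$), hence $\mu(I_n)\leq h_*\mu(h(I_n))=0$ for each collapsed interval, which pins $\mu$ down on the set where $h$ is injective; thus $P$ itself is uniquely ergodic and the standard everywhere-convergence theorem for continuous observables applies directly on $\Sigma$. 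You instead keep the observable upstairs, transport it to $\mathbb{S}^1$ as a bounded function $\hat g$ continuous off the countable $\mu$-null set $B$, invoke the Riemann-integrability extension of the unique-ergodicity convergence theorem (valid, via sandwiching between the semicontinuous envelopes), and then control the discrepancy $|g(x_j)-\hat g(h(x_j))|$ by the neat observation that minimality of $E$ forces the orbit to visit each $I_n$ at most once, so summability of the gap lengths plus uniform continuity kills the defect in Cesàro mean. Both routes are sound; the paper's buys brevity by proving unique ergodicity of $P$ abstractly, while yours is more hands-on and makes explicit the orbit-level mechanism by which the collapsed intervals become negligible (and, applied to all starting points, would in fact reprove unique ergodicity of $P$). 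All the auxiliary points you rely on check out: $\partial\Sigma=\emptyset$ so Lemma \ref{lem:boundedlength} applies without the boundary hypothesis, $|\gamma_\cdot|$ is bounded below by a positive constant on the compact circle, and the error terms in \eqref{eq:rew} remain $O(1)$.
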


\begin{proof}
The proof is basically same as the proof of Proposition \ref{IET2} (recall that if a map is uniquely ergodic, then time averages exist everywhere for the map \cite{KH95}). 
% (corresponding to the case when the zero measure set $A$ is empty). 
The only difference is  that 
%$F$ is only $\Ci ^1$, so
 %it may happen that
  the semi-conjugacy $h$ between the associated Poincar\'e map $P:\Sigma \to \Sigma$ and the associated  interval exchange transformation $E: \mathbb S^1\to \mathbb S^1$ of  the quasi-minimal set of $x$ may  be not injective, that is, there is   pairwise disjoint nonempty closed intervals $\{ I_n\}_{n\geq 1}$ of $\Sigma$ such that $h(I_n)$ is a point set for each $n\geq 1$. 
% (This corresponds to the so-called Denjoy's counterexample when $E$ is an irrational rotation.)
On the other hand, it can be   managed due to  the unique ergodicity of $E$: Let $\mu$ be an invariant measure of $P$, then the pushforward measure $h_*\mu$ of $\mu$ by $h$ (given by $h_*\mu (A) =\mu (h^{-1} A)$ for each Borel set $A\subset \mathbb S^1$) is an invariant measure of $E$. 
Moreover, since $E$ is a uniquely ergodic interval exchange transformation, $h_*\mu$ is the Lebesgue measure of $\mathbb S^1$, and thus we have $\mu (I_n) \leq h_*\mu (h(I_n))=0$ for each $n\geq 1$ 
%\marginpar{正しくない議論} 
because $h(I_n)$ is a point set and $I_n \subset h^{-1} \circ h(I_n)$. 
This immediately concludes that $P: \Sigma \to \Sigma$ is uniquely ergodic, and we complete the proof by repeating the argument in the proof of Proposition \ref{IET2}. 
%
%
%$\mu_1$ and $\mu_2$ be invariant measures of $P$, then  the pushforward measures of $\mu _1$ and $\mu_2$ by $h$ are invariant measure of $E$ and have zero measure for $I_n$
%since the diameter of $I_n$ converges to $0$ as $n\to \infty$, any point 
%
%
%%ポイント：Denjoy caseを含む（それ以外はProposition \ref{IET2}と同様にできると言えばOKか）
\end{proof}

Theorem  \ref{main-thm02} immediately follows from Theorem \ref{Poincare-Bendixson}, Propositions \ref{IETa}, \ref{IET2} and \ref{IET2r2}.

\subsection{Proof of main theorems}\label{subsection-2}

%\marginpar{おそらく、元々は「singular foliationでflow由来でないものがある、のでTheorem 5は直接的適用はできない」というための例なので、orientationを常に仮定している今となっては不自然か: どうせなら、boundaryありのcompact manifoldのsingular foliationないしwith degenerate singularitiesで、orientableだがflow由来でない例をつくるべき：それこそmetric completionのやつが例になっている？}
We will reduce existence of length averages for a  $\Ci^1$ singular foliation without 
%non-topologically-pronged 
degenerate singularities in Theorem \ref{main-thm} to existence of length averages for a singular foliation generated by a $\Ci ^1$ flow in Theorem \ref{main-thm02}. 
 We say that a singular foliation $\mathcal F$ on a compact surface $S$ is \emph{orientable} if the restriction of $\mathcal F$ on $S - \mathrm{Sing}(\mathcal F)$ is orientable. 
We need the following observation from \cite{HH1986A}*{Remark 2.3.2}.

\begin{lem}\label{FF}
Any one-dimensional $\Ci^1$ singular foliation on a closed manifold $(\mathrm{i.e.}$~a compact manifold without boundaries$)$ is orientable if and only if it can be generated by a $\Ci^1$ flow. 
\end{lem}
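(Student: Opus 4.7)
The proof naturally splits into two implications. For the easier ``if'' direction, suppose $\mathcal{F}$ is generated by a $\Ci^1$ flow $F$ with generating vector field $A$. Then $A$ cannot vanish on any regular leaf, since a zero of $A$ is a fixed point of $F$ and hence a singular leaf of $\mathcal F$. The nowhere-zero restriction $A|_{M - \mathrm{Sing}(\mathcal F)}$ therefore gives a continuous tangent field along every regular leaf, which defines a coherent orientation; hence $\mathcal F$ is orientable.

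For the ``only if'' direction, the plan is to patch local generating vector fields together using a $\Ci^1$ partition of unity. By the definition of a $\Ci^1$ singular foliation recalled in the introduction, at every point $x \in M$ there is a fibred chart $U_\alpha$ on which $\mathcal{F}|_{U_\alpha}$ is generated by a $\Ci^1$ vector field $X_\alpha$ vanishing precisely at the singular points of $\mathcal F$ lying in $U_\alpha$. Paracompactness of the closed manifold $M$ lets me pass to a locally finite refinement. Using the chosen orientation of $\mathcal F$, I would replace $X_\alpha$ by $-X_\alpha$ whenever necessary so that on the regular part of $U_\alpha$ the vector field $X_\alpha$ points in the positive direction. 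Choosing a $\Ci^1$ partition of unity $\{\rho_\alpha\}$ subordinate to $\{U_\alpha\}$, I would then define
\[
X := \sum_\alpha \rho_\alpha X_\alpha,
\]
which is a globally $\Ci^1$ vector field on $M$; since $M$ is compact, the flow it generates is complete.

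The main obstacle is the final verification that the orbits of this flow coincide with the leaves of $\mathcal F$. Away from $\mathrm{Sing}(\mathcal F)$, the vectors $X_\alpha(p)$ available at a regular point $p$ are positive scalar multiples of one another (they are tangent to the single leaf through $p$ and agree in orientation by construction), so $X(p)$ is nonzero and is itself a positive multiple of each $X_\alpha(p)$; consequently, within each fibred chart the integral curves of $X$ and of any $X_\alpha$ trace out the same local plaques, and these glue along chart overlaps to recover the leaves of $\mathcal F$. At singular points every $X_\alpha(p)$ vanishes, so $X(p) = 0$ and the singular leaf $\{p\}$ is a fixed point of the flow. The subtle issue is that, in the one-dimensional singular setting, two distinct regular leaves might \emph{a priori} fuse into a single orbit if they share a singular point in their closure; this is ruled out by the standard fact that an integral curve cannot cross a zero of its generating vector field in finite time, combined with the fibred-chart structure near the singularity.
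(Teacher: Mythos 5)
The paper does not actually prove this lemma: it is quoted as an observation from Remark 2.3.2 of \cite{HH1986A}, so there is no in-paper proof to compare against. Your ``if'' direction is fine, and your overall strategy for the ``only if'' direction --- sign-correct local tangent vector fields using the orientation and glue them with a $\Ci^1$ partition of unity --- is the standard route. But there is a genuine gap at the first step of that direction: you assert that the fibred-chart definition of a $\Ci^1$ singular foliation hands you, around \emph{every} point, a $\Ci^1$ vector field $X_\alpha$ whose orbit partition is $\mathcal{F}\vert_{U_\alpha}$ and which vanishes exactly on $\mathrm{Sing}(\mathcal F)\cap U_\alpha$. Around a regular point this follows from the local product structure, but around a singular point it does not: the plaque of a $0$-dimensional leaf is a single point, so a fibred chart there imposes essentially no structure and certainly does not produce a local generating vector field. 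Note that the paper's own definition of a (metrically) non-degenerate singularity explicitly \emph{postulates} the existence of such a local generator $A$ near the singularity --- a strong hint that the authors do not regard its existence as automatic. What the Stefan--Sussmann/Kubarski framework actually provides is a family of $\Ci^1$ vector fields tangent to $\mathcal F$ whose orbits are the leaves and which span $T_p\mathcal{F}(p)$ at every $p$; near a singular point you must work with finitely many members of this family (no single one of which need vanish exactly on the singular set), fix their signs on the regular part using the orientation, and only then form the convex combination. Your partition-of-unity step is the right mechanism, but it is applied to objects whose existence you have not established.

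Two smaller points. First, replacing $X_\alpha$ by $-X_\alpha$ ``whenever necessary'' tacitly assumes that $U_\alpha-\mathrm{Sing}(\mathcal F)$ is connected (or at least that one global sign works on all of its components); if it is disconnected, a componentwise sign flip can destroy continuity of $X_\alpha$ across the singular set, so this deserves a sentence (it is harmless for isolated singularities on a surface, which is all the paper ultimately needs). Second, besides checking that an orbit cannot reach a zero of $X$ in finite time (which you do, and which requires $X$ to be at least Lipschitz --- another reason the $\Ci^1$ regularity of the $X_\alpha$ must genuinely be secured), you should also check that the orbit of a regular point is the \emph{whole} leaf rather than a proper open sub-arc; this follows because the orbit is both open and closed in the leaf, since $X$ is nonvanishing and tangent along the leaf and the flow on the compact manifold is complete, but it is a distinct verification from the one you name.
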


%Note that there is a non-orientable  singular foliation on a closed disk 
%which is not a singular foliation generated by  a $\Ci^1$ flow (see Figure \ref{deg-sing}). 
%\begin{figure}[h]
%\begin{center}
%\includegraphics[scale=0.5]{deg-sing.eps}
%\end{center} 
%\caption{A non-orientable singular foliation with two degenerate singular points on a closed disk}\label{deg-sing}
%\end{figure} 
%On the other hand, \marginpar{一度length averageでfindをかけて、問題がないかちゃんとチェックすべき}

%\begin{lem}\label{prop:1}
%%Suppose that the set of points with historic behaviour is empty for any $\Ci ^1$ singular foliation generated by a $\Ci ^1$ flow on an orientable  closed surface.
%%Then, t
%The set of points with historic behaviour for any codimension one $\Ci^1$ orientable singular foliation  with no degenerate singular points on a compact surface is empty. 
%\end{lem}
%Theorem \ref{main-thm} immediately follow from Lemma \ref{main-thm02} and Lemma \ref{prop:1}.
%The rest of this subsection is dedicated to showing Lemma \ref{prop:1}.

Fix  a compact surface $S$ and a codimension one $\Ci ^1$  singular foliation $\mathcal{F}$ without %non-topologically-pronged
 degenerate singularities on  $S$.
To define unique ergodicity  for $ \mathcal F$ (in Definition \ref{dfn:ue2}), 
%from the foliated manifold $(S,\mathcal F)$,
 we need construct a codimension one $\Ci ^1$ orientable singular foliation $\hat{\mathcal F}$ on an orientable compact surface  $\hat S$ without boundary, together with a continuous surjection $p: \hat S \to S$ such that $\hat{\mathcal F}$ is the lifted singular foliation of $\mathcal F$ by $p$. 
%We call $(\hat S, \hat{\mathcal F})$
%the \emph{associated foliated manifold} 
% %generated by a $\Ci ^1$ flow}
%  of $(S,\mathcal F)$.
% with turbulisations along some boundary components of $S$.

%We  construct $(\hat S, \hat{\mathcal F})$. 
% in the previous paragraph. 
Firstly, if  $S$ is orientable, simply set $\hat S_1=S$ and $\hat{\mathcal F}_1=\mathcal F$, together with the identity map $p_0: \hat S_1 \to S$.
In the case when $S$ is not orientable, 
we consider  the usual \emph{orientation double covering} $\hat S_1$  of $S$ (cf.~\cite{D1995}) and    its induced foliation $\hat{\mathcal F} _1$ by  the canonical projection $p_0:\hat S_1 \to S$. 
(That is, we construct  $\hat S_1$ as the set  of pairs $(y, o)$, where $y$ is a point in $S$ and $o\in \{+,-\}$ is the orientation of $S$, and $p_0$ is given by $p_0(y,o)=y$.) 

Secondly, when $\hat S_1$ has no boundaries, simply set $\hat S_2=\hat S_1$ and $\hat{\mathcal F}_2=\hat{\mathcal F} _1$, together with the identity map $p_1: \hat S_2 \to \hat S_1$.
When $\hat S_1$ has a  boundary, let $\hat S_2$ be the double of $\hat S _1$ (i.e., $\hat S _2 = \hat S_1 \times \{ 0,1\} / \sim$, where $(x, 0)\sim (x,1)$ for all $x \in \partial \hat S_1$). Note that $\hat S_2$ has no  boundaries. 
See Figure \ref{pic01}.
%In this case, w
We let $p_1: \hat S_2 \to \hat S_1$ be  the canonical projection, given by $p_1(y,j)=y$ for $y \in \hat S_1 - \partial \hat S_1$ and $j=0,1$,  and $p_1([y,0])=y$ for $y\in \partial \hat S_1$.
%By the projection, we equip $\hat S_2$ with the pullback Riemann metric. 
Furthermore, we let  $\hat{\mathcal F} _2$ be the lifted foliation of $\hat{\mathcal F}_1$ on $\hat S_2$ 
 (denoted by $\hat{\mathcal F} _1 \sqcup _{\partial } -\hat{ \mathcal F}_1$ in Figure \ref{pic01}).
%Otherwise (i.e.~there are boundary components $\partial _1, \partial _2, \ldots , \partial _N$ of $\hat S_1$ to which $\hat{\mathcal F} _1$ is transverse), we let $\hat{\mathcal F}_2$ be the lifted foliation of $\hat{\mathcal F}_1$ on $\hat S$, 
%and moreover, let $\hat{\mathcal F}$ be the \emph{turbulisation} of $\hat{\mathcal F}_2$ along $\partial _1, \ldots , \partial _N$: 
%$\hat{\mathcal F}_2$ and  $\hat{\mathcal F}$ agree except each  small neighbourhood $U_j$ of $\partial _j$, to which the neighbourhood $(-1,1) \times \mathbb S^1$ of  $\{ 0\} \times \mathbb S^1$ is diffeomorphic  by a $\Ci ^1$ diffeomorphism $\Gamma _j$. On  $(-1,1) \times \mathbb S^1$, $\Gamma _j^{-1}(\hat{\mathcal F} _2)$ is  the trivial foliation which is transverse to  $\{ 0\} \times \mathbb S^1$, and $\Gamma _j^{-1}(\hat{\mathcal F})$ is a foliation defined by 
%\[
%\cos (\lambda ( x))  dx +\sin (\lambda ( x)) dy =0,
%\]
%where $(x , y) \in (-1, 1)\times \mathbb S^1$, and $\lambda : \R \to \R$ is an origin-symmetric smooth function with $\lambda ^{(k)}(0) =0$ for all $k\geq 0$,  $\lambda ^\prime \vert _{(0,1-\epsilon )} >0$, and $\lambda  \vert _{[1-\epsilon ,1]}= \frac{\pi }{2}$   for some $\epsilon >0$ 
%(see Figure \ref{pic02} for the construction of turbulisation).
%The original concept of turbulising a foliation was introduced by Reeb \cite{Reeb1952}, which is slightly different from our turbulisation. 

\begin{figure}[h]
\begin{center}
\includegraphics[scale=0.25]{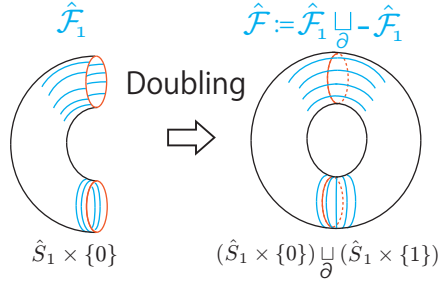}
\end{center} 
\caption{The lifted foliation $\hat \F$ (or $\hat \F _2$) of $\hat{\mathcal{F}}_1$  on the double $\hat S$ of $\hat S_1$}
\label{pic01}
\end{figure}

%（3パターン；orientationありの場合、なくて奇数prongsがない場合、なくて奇数prongもある場合；ではなくて、単にありなしの場合）
Thirdly, when  $\hat{\mathcal F}_2$ is orientable, simply set $\hat S=\hat S_2$ and $\hat{\mathcal F}=\hat{\mathcal F}_2$, together with the identity map $p_2: \hat S \to \hat S_2$.
In the case when $\hat{\mathcal F}_2$ is not orientable, 
we consider  the  \emph{tangent orientation double covering} $(\hat S^\prime , \hat{\mathcal F}^\prime )$  of $(\hat S_2 -\mathrm{Sing} (\hat{\mathcal F} _2), \hat{\mathcal F} _2 -\hat{\mathcal S}_2)  $ (cf.~2.3.5, p.16 of \cite{HH1986A}) with  the canonical projection $p_2^\prime :\hat S^\prime  \to \hat S_2$ (i.e.~$p_2^\prime (y,o)=y$ for  $y\in \hat S_2-\mathrm{Sing} (\hat{\mathcal F}_2)$ and the orientation $o\in \{ +,-\}$ of $\hat{\mathcal F}_2$ at $y$), where $\hat{\mathcal S}_2$ is the set of singular leaves of $\hat{\mathcal F}_2$. 
Let $\hat S= \hat S^\prime \sqcup (\mathrm{Sing} (\hat{\mathcal F} _2) \times \{ 0,1\})$, and define $p_2: \hat S\to \hat S_2$ by $p_2(x)=p_2^\prime (x)$ for $x\in \hat S^\prime $ and  $p_2((s,j))=s$ for $s\in\mathrm{Sing}(\hat{\mathcal F}_2)$ and $j\in \{0,1\}$. 
We inductively  define   a metric $d_{\hat S}$ on $\hat S$ as follows: 
Let $d_{\hat S}(x,y)=d_{\hat S^\prime}(x,y)$ for each $x, y\in \hat S^\prime $, where $d_{\hat S^\prime}$ is the induced metric of $d_{\hat S_2}$ by the double covering $p_2^\prime$. 
Since  $\mathrm{Sing} (\hat{\mathcal F}_2)\times \{0,1\}$ is a finite set (see comments above Definition \ref{def:HBF}), we can denote it by 
%$\mathrm{Sing} (\hat{\mathcal F}_2)\times \{0,1\} = 
$ \{ s_1, s_2,\ldots ,s_N\}$. 
Assume that  $d_{\hat S}$ is defined on $\hat S^\prime \sqcup \{ s_1, s_2, \ldots ,s_k\}$ with some $0\leq k<N$ (we let $ \{ s_1,  \ldots ,s_k\}=\emptyset$ if $k=0$, for convenience). 
%Let $s$ be a point in $\mathrm{Sing} (\hat{\mathcal F}_2)$ such that $s_{k+1} =(s,0)$ or $(s,1)$. 
Then, it is straightforward to see that the restriction of $\hat{\mathcal F}_2$ on a small ball $U$ centred at $p_2(s_{k+1})$
is orientable (because 
% if and only if the singular point is not a $k$-prong with some odd integer $k$ (recall that 
each point in $\mathrm{Sing} (\hat{\mathcal F} _2)$ is a sink, a source, a center or a saddle of a closed manifold $\hat S_2$),  
%; see also Remark \ref{??}), 
and 
thus, $p_2^{-1}(U-\{ p_2(s_{k+1})\} )$ consists of two connected components, (at least) one of which has a positive distance from $\{s_1, \ldots ,s_k\}$. 
% with respect to $d_{\hat S}$. 
%That is, if we d
Denote the connected component  by $A_{k+1}$, so that we have $d_{\hat S}(A_{k+1} , \{ s_1, \ldots , s_k\} )>0$. 
For each $x\in \hat S^\prime \sqcup \{s_1, s_2, \ldots ,s_k\}$, let  $d_{\hat S}(s_{k+1}, x):= \lim _{n\to \infty}d_{\hat S }(y_n , x)$ with some sequence $\{y _n\}_{n\geq 1}\subset A_{k+1}$ such that  $\lim _{n\to \infty}d_{\hat S_2}( p_2(s_{k+1}) ,p_2 (y_n))=0$. 
%For $x\in \hat S^\prime \cup \{(s,0)\}$, let $d_{\hat S}((s,1), x)$
%Since $\mathrm{Sing} (\hat{\mathcal F}_2)$ is a finite set (see the remark above Definition \ref{def:HBF}), we can inductively define $d_{\hat S}$ on $\hat S$ in a similar manner. 
By construction, it is not difficult to see that $p_2: \hat S\to \hat S_2$ is a double covering, and that 
% extension  of $p_2^\prime$ to $\hat S$, and 
%By construction,
 the smooth structure of $\hat S_2$ can induce a smooth structure of $\hat S$ by  
$p_2$. 
 Since the restriction of $\hat{\mathcal F}$  is orientable both on $p_2^{-1}(\hat S_2-\mathrm{Sing} (\hat{\mathcal F}_2))$ (due to the orientability of $\hat{\mathcal F}^\prime$) and on a small  neighbourhood of $p_2^{-1}(\mathrm{Sing}(\hat{\mathcal F} _2))$, 
  %due to the orientability of $\hat{\mathcal F}^\prime$ and $\hat{\mathcal F}_2$ on ,  
   $\hat{\mathcal F}$ is an orientable foliation on an orientable closed surface $\hat S$. 

We call the resulting foliation $ \hat{\mathcal F}$
the \emph{associated foliation} 
 %generated by a $\Ci ^1$ flow}
  of $\mathcal F$.
%Furthermore, we will show that if  length averages exist at $x  \in  \hat S$ for  $\hat{\mathcal{F}}$, then length averages also exist at $p(x)$  for $\mathcal F$.
%By Theorem \ref{main-thm02} and Lemma \ref{FF}, this immediately concludes 
%%completes the proof of 
%Theorem \ref{main-thm}. 
%%
%Before giving a precise construction of  $(\hat S, \hat{\mathcal F})$, we define necessary  properties  of $(S, \mathcal F)$. 
Note that  $ \hat{\mathcal F}$ can be   generated by a $\Ci ^1$ flow due to Lemma \ref{FF}. 
We need the following definition. 

\begin{dfn}\label{dfn:ue2}
We say that the union of quasi-minimal sets of a codimension one $\Ci ^1$  singular foliation $ \mathcal F$ on a compact surface $S$ is \emph{uniquely ergodic} or \emph{locally dense} if the union of quasi-minimal sets of the flow generating the associated foliation $\hat{\mathcal F}$ is  uniquely ergodic or locally dense, respectively (see Definition \ref{dfn:ue}). 
\end{dfn}

We are now ready to prove Theorem \ref{main-thm} and Corollary \ref{main-thm01b}. 

\begin{proof}[Proof of Theorem \ref{main-thm}]
%\bf Step 0. \rm 
%We only prove the item (1) because the proof of the item (2) is completely analogous (the difference is to use 
%%(one can prove the item (1) by using 
%the item (1) of Theorem \ref{main-thm02} instead of the item (2) of Theorem \ref{main-thm02}).  
%%(The procedure of turbulisations to the lifted singular foliation of $\mathcal F$ is not a necessary step to apply Lemma \ref{FF}, but without the turbulisations, it is unclear whether length averages exist at $p(x)$    when length averages exist at $x$.)
%%We finally give definitions of the properties of the union of quasi-minimal sets of Theorem \ref{thm}

%\marginpar{逆にlength averageであるべきところにtime averageがある場合もある}
Fix  a codimension one $\Ci ^1 $ singular foliation $\mathcal F$ on a compact surface $S$. 
Let $(\hat S, \hat{\mathcal F})$ be the associated foliated manifold of $(S, \mathcal F)$ given  above. 
We will show that if  length averages exist at $x  \in  \hat S$ for  $\hat{\mathcal{F}}$, then length averages also exist at $p(x)$  for $\mathcal F$, which immediately completes the proof by Theorem \ref{main-thm02} and Lemma \ref{FF}.

%\bf Step 2. \rm
First we will show that if length averages exist at $x\in \hat S_1$ for $\hat \F_1$, 
then length averages also exist at $ p_0(x) \in S$  for  $\mathcal{F}$. 
It is trivial in the case $(\hat S_1 ,\hat \F _1)=(S, \F )$, so we consider the other case. 
Assume that length averages exist at $x  \in \hat S_1$  for $\hat \F _1$. 
We also assume that $\hat{\mathcal{F}}_1 (p_0(x))$ is not compact because otherwise the claim obviously holds.
Let $\varphi :S \to \R$ be  a continuous function.
Then,  $\hat{\varphi}:= \varphi \circ p_0$ is continuous, and thus   the length average of $\hat{\varphi}$ along the leaf of $\hat{ \mathcal F}_1$ through $x$, given by
\begin{equation}\label{eq:1108a}
\lim _{r\to \infty} \frac{1}{\vert B_r^{\hat{ \mathcal F}_1} ( x) \vert } \int _{B_r^{\hat{ \mathcal F}_1} (x)} \hat{\varphi} (\tilde x)d\tilde x,
 \end{equation}
 exists because of the assumption for length averages on $x$. 
On the other hand, it is straightforward to see that  $\vert \det Dp_0 \vert \equiv 1$ and 
$p_0(B_r^{\hat{ \mathcal F}_1} ( x)) =B_r^{\mathcal F} (p_0(x))$. 
(Note that, although $p_0$ is not injective, $p_0 \vert _L$ is injective for all non-compact $L\in \hat{\mathcal F} _1$
since $p_0(L)$ is a leaf.)
Therefore, by applying  the change of variables formula, we have that
\begin{equation}\label{eq:1108b}
\frac{1}{\vert B_r^{\mathcal F}  (p_0(x)) \vert } \int _{B_r^{\mathcal F} (p_0(x))} \varphi  (\tilde y)d\tilde y = \frac{1}{\vert B_r^{\hat{ \mathcal F}_1} ( x) \vert } \int _{B_r^{\hat{ \mathcal F}_1} (x)} \hat{\varphi} (\tilde x)d\tilde x .
\end{equation}
Since $\varphi$ is arbitrary, it follows from \eqref{eq:1108a} and \eqref{eq:1108b} that length averages exist at  $p_0(x)$  for $\F$.

Next we will show that if length averages exist at $x\in \hat S_2$ for $\hat{\mathcal F}_2$, 
then  length averages also exist at $p_1(x) \in \hat S_1$  for  $\hat{\mathcal{F}} _1$. 
Only  the case when $\hat S_2$ is the double of  $\hat S_1$  is considered because the other case is trivial. 
Let $x\in \hat S_2$ be a point at which length averages exist. 
If  $\hat{\mathcal F}_1(p_1(x))$ does not have an intersection with $\partial  \hat S_1$, then it is straightforward to see that for any continuous function $\varphi$ on $\hat S_2$,  the length average of   $\varphi$ along $\hat{\mathcal F}_2(x)$ coincides with the length average of a continuous function $\hat{\varphi } = \varphi \circ p_1$ along  $\hat{\mathcal F}_1(p_1(x))$
%
% obviously %the length average of a continuous function $\varphi$ coincides with 
%$p_1(B^{\hat{\mathcal F} _2} _r(x))= B^{\hat{\mathcal F }_1}_r(p_1(x))$ for any $r>0$  and  $\vert \det p_1\vert \equiv 1$ on $\hat{\mathcal F}(x)$, so  length averages of $p_1(x)$ also exist
 (see the argument in the previous paragraph), so length averages at $p_1(x)$ exist. 
Moreover, if $\hat{\mathcal F}_1(p_1(x))$ is included in $\partial  \hat S_1$, then the length of $\mathcal F_1(p_1(x))$ is finite and  length averages at $p_1(x)$  exist. 
Therefore, 
we consider the case when $\hat{\mathcal F}_1(p_1(x)) \cap \partial \hat S_1 \neq \emptyset$ and $\hat{\mathcal F}_1(p_1(x)) \not \subset \partial \hat S_1$. 
If $\hat{\mathcal F}_1(p_1(x)) $ is transverse to $ \partial \hat S_1 $ with respect to both direction, then $\hat{\mathcal F}_1(p_1(x)) $  is (diffeomorphic to) a closed interval, and so length averages at $p_1(x)$ exist. 
If $\hat{\mathcal F}_1(p_1(x)) $ is transverse to $ \partial \hat S_1 $ at exactly one point $p_1(y)$ with $y\in \partial \hat S_2$, then by repeating the argument in the previous paragraph, one can check that 
 %it is straightforward to see that 
   for any continuous function $\varphi$ on $\hat S_2$,  the length average of   $\varphi$ along $\hat{\mathcal F}_2(x)=\hat{\mathcal F}_2(y)$ coincides with  double of  the length average of a continuous function $\hat{\varphi } = \varphi \circ p_1$ along  $\hat{\mathcal F}_1(p_1(x)) = \hat{\mathcal F}_1(p_1(y))$, so length averages exist at $p_1(x)$.

Finally we need to  show that if length averages exist at $x\in \hat S$ for $\hat{\mathcal F}$, 
then length averages also exist at $p_2(x) \in \hat S_2$  for  $\hat{\mathcal{F}} _2$. 
However, it is completely analogous to the previous argument for the  double covering $p_0: \hat S_1\to S$ because $p_2: \hat S\to \hat S_2$ is also a double covering. 
%  except the branch set and  the branch set only consists of  singular points. 
This completes the proof. 
\end{proof}

%\subsection{Proof of Corollary \ref{main-thm01b}}

\begin{proof}[Proof of Corollary \ref{main-thm01b}]
Let $S$ be a compact surface with a regular foliation $\mathcal F$. 
Since the Euler characteristic of any manifold with a regular foliation is $0$ by the Poincar\'e-Hopf theorem, $S$ is a torus, an annulus,   a M\"obius band or a Klein bottle.
%\marginpar{Euler characteristicが$0$の時云々のreferenceを載せていただけますでしょうか} 
On the other hand, the total number of quasi-minimal sets of $\mathcal F$ cannot exceed $g$ if $S$ is an orientable surface of genus $g$ \cite{Mayer1943},
and $\frac{h-1}{2}$
if $S$ is a non-orientable surface of genus $h$ \cite{Markley1970}.
So, when $S$ is not a torus, there is no quasi-minimal set of $\mathcal F$. 
%\marginpar{No quasi-minimal set云々のreferenceを載せていただけますでしょうか}
When $S$ is a torus, the union of quasi-minimal sets of $\mathcal F$ is uniquely ergodic by Denjoy-Siegel theorem. 
%  (cf.~\cite[\S 1]{Tam}, \cite[\S 11]{KH95}). 
%(see Remark \ref{rmk:r121}).
%Fix a compact surface $S$ and a codimension one $\Ci ^1$ regular foliation $\mathcal F$.
%If  $\mathcal F$ is orientable, then the conclusion immediately follows from Lemma \ref{main-thm}. 
%So, we assume $\mathcal F$ to be not orientable.
%Let $\hat{\mathcal F} $ be the codimension one $\Ci ^1$ orientable regular foliation on the double covering space $\hat S$ of $S$ associated with orientations of $\mathcal F$, called  the \emph{tangent orientation covering} (see 2.3.5, p.16 of \cite{HH1986A}). 
%That is, we construct  $\hat S$ as the set  of pairs $(y, o)$, where $y$ is a point in $S$ and $o\in \{+,-\}$ is the orientation of $\mathcal F$, equipped with the pullback Riemann metric by the canonical projection $p:\hat S \to S$ given by $p(y,o)=y$. 
%Then, by an argument similar to one in Step 2 in the proof of Lemma \ref{main-thm01a}, we can see that if length averages exist everywhere for $(\hat S, \hat{\mathcal F})$, then 
%length averages also exist everywhere for $(S, \mathcal F)$.
Therefore, we complete the proof of Corollary \ref{main-thm01b} by Theorem  \ref{main-thm}.
\end{proof}

\section{Examples}\label{example}

\begin{example}\label{ex1}
There is a smooth codimension one regular foliation on an unbounded manifold without length averages. 
Let $M=\mathbb R^k \times \mathbb R^{m-k}$ and $\mathcal{F} = \{ \mathbb{R}^k \times \{ t \} \mid t \in \mathbb{R}^{m-k} \}$. 
Let $\varphi: M \to [-1,1]$ be  a continuous function  such that
%$\varphi(x,y) = 0$ if $x_1 < -1$, 
$\varphi(x,y) = 1$ if 
%$x_1 > 0$ and
 $\vert x \vert \in \bigcup _{n \in \mathbb{N}} (10^{2n-1} +1 , 10^{2n} -1 )$  and 
$\varphi(x,y) = -1$ if 
%$x_1 > 0$ and
 $\vert x \vert \in \bigcup_{n \in \mathbb{N}} (10^{2n} +1 , 10^{2n+1} -1 )$ with $x\in \mathbb R^k $ and $y\in  \mathbb R^{m-k}$. 
 %, where $x_1$ is the first coordinate of $x$.
Then, it follows from a straightforward calculation  that 
 %on every leaf of $\mathcal F$, 
 the length average  of  $\varphi$ does not exist. 
Moreover, if the above foliated manifold can be isometrically embedded into a foliated $m$-dimensional manifold with $k$-dimensional leaves, then there is a nonempty open subset which consists of leaves  without length averages.

%\begin{figure}[h]\label{fig01}
%\begin{center}
%\includegraphics[scale=0.35]{non-cpt.eps}
%\end{center} 
%\caption{A codimension one regular foliation on a non-compact manifold $\mathbb{R}^2$
%with historic behaviour}
%\end{figure}
\end{example}

\begin{example}\label{ex2}
There is a smooth codimension one regular foliation on an incomplete manifold 
without length averages. 
We are indebted to Masayuki Asaoka for the construction.
Let $M$ be an open disk in $\mathbb R^2$. Let $D_n \subset M$ ($n\geq 1$) be a square  whose side has a length $(\frac{3}{4})^n$ such that $D_n$ borders on $D_{n+1}$ in the right side and that $D_n$ accumulates to the boundary of $M$. 
Also, define  a piecewise linear  leaf $L$ of $M$ as  $L\cap D_n$ is the Koch curve of level $n$ scaled by $(\frac{3}{4})^n$ (refer to \cites{Koch1904,falconer2004fractal} for the definition of the Koch curve).
See Figure \ref{fig01aa}.
Since the length of the Koch curve of level $n$ is $(\frac{4}{3})^{n}$, the length of $L\cap D_n$ is $1$.
Finally, let  $U$ be a small neighbourhood of $\cup _{n\in \mathbb N}D_n $ such that $L\cap (U - \cup _{n\in \mathbb N}D_n  )$ has  finite length.
Let   $\varphi :M\to \mathbb R$ be a uniformly bounded continuous function such that $\varphi (x)=1$ if $x \in D_n$ ($10^{2m-1}+1\leq n\leq 10^{2m}-1$, $m\geq 1$),  $\varphi (x) =-1$ if $x\in D_n$ ($10^{2m}+1\leq n\leq 10^{2m+1}-1$, $m\geq 1$) and  $\varphi (x) =0$ if $x\not\in U$.
Then, it is easy to check that on $L$, the length average of $\varphi$ (in the sense of piecewise $\Ci ^1$) does not exist.   
Furthermore, one can find  a smooth curve along which the length average of $\varphi$ does not exist by applying a standard argument for mollifiers to the piecewise linear curve $L$.\footnote{  
%For simplicity, assume $M$ to be a subset of $\mathbb R^2$. 
%Given $n\geq 1$, 
Let $\ell  \equiv (\ell _1 , \ell _2) : \mathbb R\to M$ be a parametrisation of $L$ with velocity $1$ (except singular points of $L$) such that  $\ell (n)$ is the right endpoint of $L\cap D_n$ for each $n\geq 1$.
Let $\chi :\mathbb R\to [0,1]$ be a $\mathscr C^\infty$, compactly supported,  symmetric function  such that $\int _{\mathbb R} \chi (t) dt=1$. 
%, and $\chi (t) =1$ when $\vert t\vert \leq 1$ and $\chi (t) =0$ when $\vert t\vert \geq 2$. 
For each positive number $\delta$, let $\ell _{\delta } \equiv (\ell _{\delta ,1} , \ell _{\delta ,2}): \mathbb R \to M$ be a smooth function such that   $\ell _{\delta , j }$ ($j=1, 2$) is the convolution of $\ell _j$ and $t\mapsto \delta ^{-1} \chi (\delta ^{-1}t )$.  
Let $\mathcal A_n$ be the   set of singular points of $L$ in $D_n$ and $a_n=\# \mathcal A_n$ for $n\geq 1$. 
Fix $\epsilon \in (0,1)$. 
Then, it is straightforward to  check that for each $n\geq 1$, there is a positive number $\delta (n) $ such that $\ell _{\delta (n )} ([n-1,n])$ is a smooth curve  agreeing with $L\cap D_n$ except the $(\frac{3}{4}) ^{2n} \cdot \frac{\epsilon }{a_n}$-neighbourhood of $\mathcal A_n$. 
% $L\cap D_n$
Therefore, along a smooth curve $\tilde L$  satisfying  that $\tilde L\cap \left(\cup _{n\in \mathbb N} D_n\right) = \cup _{n\in \mathbb N} \ell _{\delta (n )} ([n-1,n])$ and $\tilde L \cap  (U - \cup _{n\in \mathbb N}D_n  )$ has finite length,  
 the length average of $\varphi$  also does not exist.
%
%, we can modify the foliation into one which is generated by a smooth  flow on $M$. 
%Indeed, given a small positive number $\varepsilon$, we can take a smooth leaf $\tilde{L}$ of $M$ such that $\tilde{L} \cap D_n$ is a modification of $L \cap D_n$ in small neighbourhoods of corners and the length of $\tilde{L} \cap D_n$ equals to $1 - \varepsilon$ (by replacing a small segment near each corner of $L \cap D_n$ with a smooth curve of which length is $\varepsilon/a_n$ short, where $a_n$ is the number of corners in  $L \cap D_n$). 
}

\begin{figure}[h]\label{fig01aa}
\begin{center}
\includegraphics[scale=0.35]{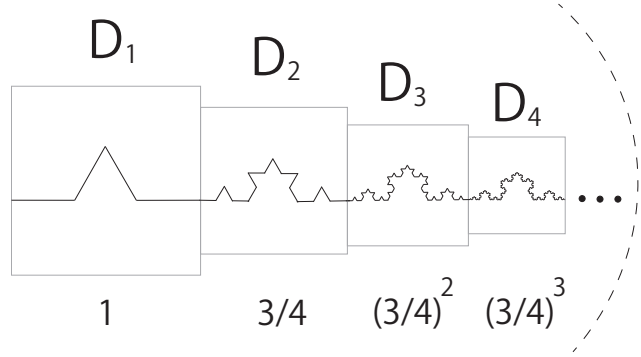}
\end{center} 
\caption{A codimension one regular foliation on an open disk 
without  length averages}
\end{figure}
\end{example}

\begin{example}\label{ex3}

One can find points without length averages for a smooth codimension one foliation with degenerate singularities on a compact surface.
The idea of the construction is basically same as the previous example.
Let $M$ be a compact surface and $L$ be a leaf constructed in the same manner as in the previous example, except that $D_n$ accumulates to a point in $M$. Then,  on the leaf $L$, some length average does not exist   by virtue of the argument in the previous example (so that the accumulation point must be a degenerate singular point by Theorem \ref{main-thm}).
 
\end{example}

 %\marginpar{若干修正したい？}
\begin{remark}
Examples \ref{ex2} and \ref{ex3} may show that (non-)existence of length averages for foliations is not preserved by some homeomorphisms of foliations (but preserved by any $\Ci ^1$ diffeomorphisms of foliations), while (non-)existence of time averages for flows  is preserved by  topological  conjugacy.
\end{remark}

\begin{example}\label{ex4}
There is a codimension \emph{two} regular foliation on a compact manifold without length averages. 
Let $P$ be a diffeomorphism on a compact manifold $N$ such that there exists a positive Lebesgue measure set $D$ consisting of points without time averages. 
 As we mentioned in Section \ref{section:introduction}, there are several examples of  diffeomorphisms $P$ satisfying the condition for time averages.
 Furthermore, for simplicity, we assume that the backward orbit of each $x\in D$ along $P$ accumulates to a source $\hat{p}$, such as the time-one map of the Bowen flow (pictured in Figure \ref{fig0}, see Remark \ref{rmk:Bowen}).
 Let $\mathcal F$ be the trivial suspension of $P$. That is, $M=N \times [0,1]/\sim$, where $(\tilde x,1)\sim (P(\tilde  x) ,0)$ for $\tilde x\in N$, and each leaf $L$ of $\mathcal F$ is of the form $\left(\bigcup _{n\in \mathbb Z}\{ P^n(\tilde x)\} \times [0,1] \right)/\sim$.
 Fix $\tilde x\in D$ and a continuous function $\tilde \varphi :N \to \mathbb R$ such that the time average of $\tilde \varphi $ along the forward orbit of $\tilde x$ does not exist. 
 Let $\varphi $ be a continuous function on $M$ given by $\varphi (y,s) =\tilde \varphi (y)$ for each $(y,s) \in N \times (0,1)$, so that $\tilde \varphi (y) =\int _{B_{1/2} ^{\mathcal F}((y, \frac{1}{2}))} \varphi (z) dz$.  
 Then,  it is not difficult  to check that the length average of $\varphi $ along the leaf $\mathcal{F}((\tilde x,\frac{1}{2}))$, given    in \eqref{eq:hb2}, equals to  $\left(\lim _{n\to \infty} \frac{1}{n}\sum _{ j=0}^{  n-1} \tilde \varphi (P^j (\tilde x)) + \tilde \varphi (\hat{p}) \right) /2$, which does not exist due to the choice of $\tilde x$ and $\tilde \varphi$.

\end{example}

\begin{remark}\label{rmk:ex4}
Example \ref{ex4} implies a canonical correspondence between time averages for a \emph{suspension} flow and length averages for the foliation generated by the flow. 
%Note also that, as well as the previous examples, the construction in Example \ref{ex4} can be easily generalised to higher dimensional foliations. 
%Compare with Corollary \ref{main-thm01b}.
%The construction
%in Example \ref{ex4} (together with ergodic theory of group actions by circle diffeomorphisms \cite{DKN2018}) 
This observation may be  helpful when one considers existence problem of length averages for  higher dimensional foliations (in particular, codimension-one foliations), such as Sacksteder foliations (\cite{Sacksteder1964}).
\end{remark}

%\section*{Acknowledgments}
%We are deeply grateful to  anonymous reviewers for many suggestions, all of which substantially improved the paper.

%\section*{Acknowledgments}
%We are deeply grateful to  anonymous reviewers for
%many suggestions, all of which substantially improved the paper.
%This work was partially supported by JSPS KAKENHI
%Grant Numbers 17K05283, 18K03376 and 18H01136 and by the JST PRESTO Grant Number JPMJPR16ED.

%
%
%\bibliographystyle{my-amsplain-nodash-abrv-lastnamefirst-nodot}
%\bibliography{MATHabrv,NY,YT}

\bibliographystyle{my-amsplain-nodash-abrv-lastnamefirst-nodot}
%\bibliography{MATHabrv,NY,YT}

\end{document}